\pdfoutput=1

\newif\ifpersonal
\personaltrue

\documentclass[12pt,a4paper,dvipsnames,x11names]{amsart}

\usepackage[utf8]{inputenc}
\usepackage[T1]{fontenc}
\usepackage[english]{babel}
\usepackage[a4paper,margin=1in]{geometry}

\usepackage{microtype,lmodern}
\usepackage{mathpazo}
\usepackage{euler}

\usepackage{amsmath,amssymb,amsthm,mathtools,mathrsfs,bm,eucal,tensor}
\usepackage{stmaryrd}
\usepackage{dsfont}

\usepackage[all,cmtip]{xy}
\usepackage{tikz}
\usetikzlibrary{arrows.meta,calc,positioning,fit,backgrounds,shapes.geometric}
\usepackage{tikz-cd}

\tikzset{
  >=Stealth,
  box/.style={draw, rounded corners, inner sep=6pt},
  v/.style={draw, circle, inner sep=1.4pt},
  arr/.style={->, thick},
  darr/.style={->, thick, dashed}
}

\usepackage{enumerate,comment,braket,xspace,csquotes}
\usepackage{xcolor}
\usepackage{hyperref}
\hypersetup{
  colorlinks=true,
  linkcolor=blue!60!black,
  citecolor=blue!60!black,
  urlcolor=blue!60!black
}
\usepackage[capitalize,nameinlink]{cleveref}

\numberwithin{equation}{section}

\theoremstyle{plain}
\newtheorem{theorem}{Theorem}[section]
\newtheorem{proposition}[theorem]{Proposition}
\newtheorem{lemma}[theorem]{Lemma}

\newtheorem{example}[theorem]{Example}

\theoremstyle{definition}
\newtheorem{definition}[theorem]{Definition}

\theoremstyle{remark}
\newtheorem{remark}[theorem]{Remark}

\newcommand{\CC}{\mathbb C}

\newcommand{\II}{\mathscr I}
\newcommand{\OO}{\mathscr O}

\newcommand{\End}{\mathrm{End}}

\newcommand{\GL}{\mathrm{GL}}

\newcommand{\reg}{\mathrm{reg}}
\newcommand{\sing}{\mathrm{sing}}

\begin{document}

\title[Log Bott localization with non-isolated lci zero varieties]{Log Bott localization with non-isolated lci zero varieties}

\author{Maur\'icio Corr\^ea}
\address{Maur\'icio Corr\^ea\\
Universit\`a degli Studi di Bari\\
Via E. Orabona 4, I-70125 Bari, Italy}
\email[M. Corr\^ea]{mauricio.barros@uniba.it, mauriciomatufmg@gmail.com}

\author{Elaheh Shahsavaripour}
\address{Elaheh Shahsavaripour\\
Universit\`a degli Studi di Bari\\
Via E. Orabona 4, I-70125 Bari, Italy}
\email[Elaheh Shahsavaripour]{e.shahsavaripour@gmail.com}

\date{}

\begin{abstract}
We establish a logarithmic Bott localization formula for global holomorphic sections of $T_X(-\log D)$ on a compact complex manifold $X$ with simple normal crossings divisor $D$. The zero scheme is allowed to have non-isolated compact components, assumed to be local complete intersections and to satisfy the natural Bott nondegeneracy condition. We further develop a current-theoretic formulation and, in the local complete intersection case, identify the local residue term with a Coleff--Herrera current.
\end{abstract}

\maketitle

\section{Introduction}\label{sec:intro}

The localization of characteristic classes at the singular set of a vector field is a classical theme in complex geometry. In the holomorphic setting, Bott's residue formula shows that characteristic numbers of a compact complex manifold can be recovered from local data attached to the zero set of a holomorphic vector field. A crucial point is that Bott's theorem is not restricted to isolated zeros: it already allows higher-dimensional zero components, provided they are smooth and non-degenerate in the first-order sense determined by the induced action on the normal bundle \cite{Bott67b}. Baum and Bott later extended this perspective from vector fields to holomorphic foliations, thereby establishing the standard residual interpretation of characteristic classes in the foliated setting \cite{BaumBott72}.

More recently, several extensions of this circle of ideas have been developed. On the logarithmic side, a Baum--Bott formula for foliations by curves, together with a singular-variety version obtained through log resolutions, was established in \cite{CorreaLourencoMachado24}. The determination of residues attached to holomorphic vector fields and foliations is a classical theme and remains one of the most delicate problems in the subject.  Recent years have witnessed substantial progress on these questions; among representative contributions in this direction, we mention \cite{CorreaSuwa25,CorreaLourenco19,KaufmannLarkangWulcan23}. For a broader perspective on the interplay among indices of vector fields, residue theories for foliations, and the geometry of singular varieties, we refer to \cite{SuwaBook,BrasseletSeadeSuwa09,CorreaSeade26}.  

The purpose of the present paper is to prove a logarithmic Bott-type localization theorem for global logarithmic vector fields with non-isolated zero sets. We work on a compact complex manifold $X$ endowed with a simple normal crossings divisor $D$, and consider a logarithmic vector field
$$
v\in H^0\bigl(X,T_X(-\log D)\bigr).
$$
We assume that the zero scheme of $v$ is a finite disjoint union
$$
Z(v)=\bigsqcup_{j=1}^m Z_j
$$
of compact, reduced, pure-dimensional local complete intersection subspaces. Along each component $Z_j$ we consider the Bott action induced by $v$ on the conormal sheaf, and we impose the natural nondegeneracy condition that this action be invertible. Under these hypotheses, the characteristic number
$$
\int_X \Phi\bigl(T_X(-\log D)\bigr)
$$
localizes on the components of $Z(v)$.
The resulting formula is a logarithmic counterpart of Bott's non-isolated residue theorem in a more singular geometric setting. The novelty is not merely that the zero set may have positive-dimensional components. Rather, the ambient geometry is logarithmic, the relevant bundle is $T_X(-\log D)$, and the zero components are allowed to be local complete intersections, hence need not be smooth a priori. The local residue is therefore described not only by transverse linearization, but by the intrinsic Bott action on the conormal bundle together with the induced decomposition of the logarithmic tangent bundle along the smooth locus of each component. 
A concrete illustration is provided by the Fulton--MacPherson space $(\mathbf P^2)[2]$, namely the compactification of the moduli space of two ordered distinct points in $\mathbf P^2$ \cite{FultonMacPherson94}. In this case the boundary divisor is the exceptional divisor of the blow-up of $\mathbf P^2\times \mathbf P^2$ along the diagonal, and a natural one-parameter subgroup of $\operatorname{PGL}_3$ induces a global logarithmic vector field whose zero locus has positive-dimensional components both outside and inside the boundary. Our theorem then yields the explicit identity
$$
\int_{(\mathbf P^2)[2]} c_4\bigl(T_{(\mathbf P^2)[2]}(-\log D)\bigr)=6,
$$
recovering this characteristic number by localization on a genuine moduli space with non-isolated logarithmic zero components; see example ~\ref{subsec:fm-p2-two-points}.

Moreover, the logarithmic localization theorem also applies after passage to a functorial log resolution \cite{CorreaLourencoMachado24}. More precisely, if
$
\pi:(X,D)\to Y
$
is a log resolution of a compact normal variety and the transformed object is generated on $X$ by a global logarithmic vector field
$
\widetilde v\in H^0\bigl(X,T_X(-\log D)\bigr),
$
then, under the hypotheses of the main theorem, namely that the zero scheme of $\widetilde v$ is a finite union of compact reduced Bott non-degenerate local complete intersection components, the associated logarithmic residues are well defined and compute the global characteristic number by localization, see example \ref{exe}.

The non-isolated situation is not merely a technical generalization. Even on compact complex manifolds carrying many global vector fields or distributions, positive-dimensional singular sets arise naturally. For instance, on Hopf manifolds, singular holomorphic distributions of dimension or codimension one cannot be purely isolated in dimension at least three: either the singular set is empty or it has a positive-dimensional component. This follows from Baum--Bott type arguments and was proved in \cite{CorreaFerreiraFernandezPerez16}. It is therefore natural to seek localization formulas whose local terms are supported not only at isolated points but also along compact higher-dimensional components.

The main theorem extends Bott's non-isolated localization principle from smooth non-degenerate zero manifolds to compact reduced lci zero varieties in the logarithmic setting. At the same time, it complements the logarithmic Baum--Bott theory of \cite{CorreaLourencoMachado24}: that work is formulated for foliations and singular varieties via log resolutions, whereas the present article works directly on a smooth logarithmic pair and treats a global logarithmic vector field as the basic object.

The paper is organized as follows. Section~2 constructs the Bott action on the conormal bundle of each zero component and formulates the corresponding nondegeneracy condition. Section~3 recalls the current-theoretic tools needed later and proves the explicit Coleff--Herrera realization of the local contribution. Section~4 studies tubular neighborhoods of compact lci subspaces. Section~5 contains the transgression argument and the proof of the main theorem.

Let $X$ be a compact complex manifold of complex dimension $n$, let $D\subset X$ be a simple normal crossings divisor, and consider the logarithmic tangent bundle $T_X(-\log D)$. A section
$$
v\in H^0\bigl(X,T_X(-\log D)\bigr)
$$
is, by definition, a logarithmic derivation. In local SNC coordinates
$D=\{z_1\cdots z_\ell=0\}$ it has the form
\begin{equation}\label{eq:local-log-v-intro}
v=\sum_{i=1}^{\ell} a_i(z)\,z_i\frac{\partial}{\partial z_i}+\sum_{j=\ell+1}^{n} a_j(z)\,\frac{\partial}{\partial z_j},
\qquad a_i\in \OO_X,
\end{equation}
see Proposition \ref{prop:log-local} below.
The scheme-theoretic zero locus $Z(v)\subset X$ is defined by the ideal sheaf
\begin{equation}\label{eq:zero-ideal}
\II_{Z(v)}:=Im\Big(\Omega_X^1(\log D)\xrightarrow{ i_v }\OO_X\Big),
\end{equation}
where $i_v$ is contraction. Equivalently, in any local trivialization
$T_X(-\log D)|_U\simeq \OO_U^{\oplus n}$, if $v$ is represented by $(v_1,\dots,v_n)$ then $\II_{Z(v)}|_U=(v_1,\dots,v_n)$. In particular, the underlying set of $Z(v)$ is $\{p\in X\mid v(p)=0\}$.

Now, assume that $Z(v)$ is a finite disjoint union of compact, reduced, pure-dimensional lci analytic subspaces
\begin{equation}\label{eq:Zdecomp}
Z(v)=\bigsqcup_{j=1}^m Z_j,
\end{equation}
where each $Z_j$ has pure complex dimension $r_j$ and codimension $k_j=n-r_j$. Write $\II_j:=\II_{Z_j}$.

Because $Z_j$ is reduced and $v$ vanishes pointwise on $Z_j$, the derivation $v$ preserves $\II_j$ and hence $\II_j^2$. Therefore $v$ induces an $\OO_{Z_j}$-linear endomorphism of the conormal sheaf
\begin{equation}\label{eq:Astar-intro}
A_{Z_j}^*:\II_j/\II_j^2\longrightarrow \II_j/\II_j^2,
\qquad [f]\longmapsto [v(f)].
\end{equation}
Since $Z_j$ is lci, $\II_j/\II_j^2$ is locally free of rank $k_j$, and the normal bundle is
\begin{equation}\label{eq:normal-intro}
N_{Z_j/X}:= \mathscr{H}om_{\OO_{Z_j}}(\II_j/\II_j^2,\OO_{Z_j}).
\end{equation}
Dualizing \eqref{eq:Astar-intro} gives the normal endomorphism
\begin{equation}\label{eq:A-intro}
A_{Z_j}:=(A_{Z_j}^*)^\vee:N_{Z_j/X}\longrightarrow N_{Z_j/X}.
\end{equation}

\begin{definition}[Bott-nondegeneracy condition]\label{def:nondeg-intro}
We say that $v$ is \emph{Bott nondegenerate along $Z_j$} if $A_{Z_j}^*$, equivalently $A_{Z_j}$, is pointwise invertible on $Z_j$.
\end{definition}

At smooth points of $Z_j$, $A_{Z_j}^*$ agrees with the transverse Jacobian of $v$ in normal coordinates; see Proposition \ref{prop:jacobian}.

For the localization formula one needs, on $Z_j $, the quotient of the logarithmic tangent bundle by the normal directions determined by $Z_j$. Define
\begin{equation}\label{eq:rho-map-intro}
\rho_j:T_X(-\log D)|_{Z_j }\longrightarrow N_{Z_j/X}|_{Z_j },
\qquad
\xi\longmapsto \big([f]\mapsto [\xi(f)]\big).
\end{equation}
By Lemma \ref{lem:rho-well-defined}, this is a well-defined holomorphic bundle morphism.

\begin{definition} \label{def:surj-intro}
We say that the zero component $Z_j$ is \emph{logarithmically transversal} if the map $\rho_j$ is surjective on $Z_j $.
\end{definition}

Under this hypothesis the kernel
\begin{equation}\label{eq:Kj-intro}
K_j:=\ker(\rho_j)\subset T_X(-\log D)|_{Z_j }
\end{equation}
is a holomorphic vector bundle of rank $r_j$, and one has a short exact sequence
\begin{equation}\label{eq:exact-KN-intro}
0\longrightarrow K_j\longrightarrow T_X(-\log D)|_{Z_j }\xrightarrow{\ \rho_j\ }N_{Z_j/X}|_{Z_j }\longrightarrow 0.
\end{equation}
In the classical situation $D=\varnothing$ and $Z_j$ smooth, or more generally whenever $Z_j $ is a log-smooth submanifold of $(X,D)$, the bundle $K_j$ identifies with the tangent bundle of the regular locus.

Fix a $\GL_n(\CC)$-invariant polynomial $\Phi$ on $\mathfrak{gl}_n(\CC)$, homogeneous of degree $n$.
We write
\begin{equation}\label{eq:CW-notation-intro}
\int_X \Phi\bigl(T_X(-\log D)\bigr)
\end{equation}
for the characteristic number determined by the invariant polynomial $\Phi$ and the logarithmic tangent bundle $T_X(-\log D)$.
On $Z_j $ choose Hermitian metrics on $K_j$ and on $N_{Z_j/X}|_{Z_j }$, with Chern connections $\nabla_{K,j}$ and $\nabla_{N,j}$ and curvatures $\Omega_{K,j}$ and $\Omega_{N,j}$.
If $v$ is Bott nondegenerate along $Z_j$, then $A_{Z_j}$ is invertible on $Z_j $, so the inhomogeneous form $\det(A_{Z_j}+\Omega_{N,j})$ has invertible degree-$0$ term and therefore admits an inverse in the graded algebra of differential forms; see Lemma \ref{lem:invdet}.

\begin{definition} \label{def:resform-intro}
Assume Bott nondegeneracy and the surjectivity hypothesis along $Z_j$. Define
\begin{equation}\label{eq:Resform-intro}
\operatorname{Res}_{Z_j}(\Phi)
:=
\Big(\Phi(A_{Z_j}+\Omega_{K,j})\wedge \det(A_{Z_j}+\Omega_{N,j})^{-1}\Big)_{[2r_j]}
\in \Omega^{2r_j}(Z_j^\reg).
\end{equation}
Here $(-)_{[2r_j]}$ denotes the component of total degree $2r_j$.
\end{definition}

To integrate over a possibly singular component we use the fundamental current. For a compact reduced analytic subspace $Z\subset X$ of pure dimension $r$, define
\begin{equation}\label{eq:frown-intro}
\langle [Z],\eta\rangle:=\int_{Z}\eta|_{Z },
\qquad \eta\in \Omega_c^{2r}(X).
\end{equation}
Lemma \ref{lem:fundamental-current} recalls that $[Z]$ is a well-defined closed current.

We can now state the localization theorem in its final intrinsic form.

\begin{theorem}\label{thm:main}
In the standing setup above, assume that for every irreducible component $Z_j$ of the zero scheme of $v$ the following hold:
\begin{enumerate} 
\item $Z_j$ is compact, reduced, pure-dimensional, and lci;
\item $v$ is Bott nondegenerate along $Z_j$ in the sense of Definition \ref{def:nondeg-intro};
\item the natural morphism
$$
\rho_j:T_X(-\log D)|_{Z_j}\to N_{Z_j/X}|_{Z_j}
$$
is surjective.
\end{enumerate}
Then, for every $\GL_n(\CC)$-invariant polynomial $\Phi$ of degree $n$,
\begin{equation}\label{eq:main}
\int_X \Phi\bigl(T_X(-\log D)\bigr)
=
\sum_{j=1}^m \langle [Z_j],\operatorname{Res}_{Z_j}(\Phi)\rangle,
\end{equation}
where
$$
\operatorname{Res}_{Z_j}(\Phi)
=
\Bigl(\Phi(A_{Z_j}+\Omega_{K,j})\wedge \det(A_{Z_j}+\Omega_{N,j})^{-1}\Bigr)_{[2r_j]}
\in \Omega^{2r_j}(Z_j^{\reg}).
$$
\end{theorem}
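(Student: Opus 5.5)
The plan is the classical Bott transgression argument, run on the bundle $E:=T_X(-\log D)$ instead of $T_X$, with the zero components replaced by tubular neighbourhoods of the lci spaces $Z_j$ as in Section~4.

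\emph{Step 1: global transgression.} The section $v$ of $E$ gives a contraction operator $i_v$ on $\Omega^{\bullet}_X(\log D)$. Away from $Z=Z(v)$, choose a $C^\infty$ $(1,0)$-form $\theta$ with values in $E^\vee$ and $\langle\theta,v\rangle=1$, for instance $\theta=h(\cdot,v)/|v|^2$ for a Hermitian metric $h$ on $E$. Form the Bott connection $\nabla^{\theta}=\nabla+\theta\otimes(\nabla v)$-type correction, or equivalently use $v$-equivariant curvature $\Omega-L_v$ with $L_v=\nabla v-\mathcal L_v$. Because $E$ is an honest holomorphic bundle and $v$ is a holomorphic section of it, the standard identity $\bar\partial$-closedness plus $i_v$-exactness carries over verbatim. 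Following Bott, I expect to obtain on $X\setminus Z$ a form $T$ of degree $2n-1$ with
\[
\Phi(\Omega_E)=dT \quad \text{on } X\setminus Z ,
\]
where $\Phi(\Omega_E)$ is computed with a connection that is Bott-adapted (its $\Phi$-form vanishes identically off $Z$ by the degree argument, since the Bott curvature has a $1$-dimensional ``kernel'' direction). Stokes then gives $\int_X\Phi(E)=\sum_j\lim_{\varepsilon\to0}\int_{\partial U_j^\varepsilon}T$, where $U_j^\varepsilon$ is a tubular neighbourhood of $Z_j$ from Section~4, for instance $\{\sum|f_i|^2<\varepsilon\}$ with $f_i$ local generators of $\II_j$ patched by a partition of unity.

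\emph{Step 2: local computation at $Z_j^{\reg}$.} On the regular locus, Proposition~\ref{prop:jacobian} identifies $A_{Z_j}$ with the transverse Jacobian. Hypothesis (3) provides the splitting \eqref{eq:exact-KN-intro} of $E|_{Z_j}$, extended $C^\infty$ to $U_j$. Choose a connection on $E|_{U_j}$ compatible with $K_j\oplus N_{Z_j/X}$ via $\rho_j$. In normal directions, $v$ equals $A_{Z_j}\cdot(\text{fibre coordinate})$ plus higher-order terms. The boundary integral then reduces, by fibre integration over the sphere bundle of $N_{Z_j/X}$, to the Bott integral
\[
\int_{S^{2k_j-1}}T=\Bigl(\Phi(A+\Omega_K)\det(A+\Omega_N)^{-1}\Bigr)_{[2r_j]}.
\]
This uses Lemma~\ref{lem:invdet} and the Bott nondegeneracy of (2), so the formula is exactly that of Definition~\ref{def:resform-intro}. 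Independence of metrics follows from a homotopy argument within the same Chern–Weil formalism.

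\emph{Step 3: singular locus.} This is the main obstacle. One has to show that the limit of $\int_{\partial U_j^\varepsilon}T$ equals $\langle[Z_j],\operatorname{Res}_{Z_j}(\Phi)\rangle$, with no extra contribution concentrated on $Z_j^{\sing}$. My approach is to express the fibre-integrated limit as the action of the Coleff--Herrera current $\bar\partial(1/f_{1})\wedge\cdots\wedge\bar\partial(1/f_{k_j})$ of the lci ideal. This is the realization proved in Section~3, where $\lim_\varepsilon$ of boundary integrals over $\{|f|^2=\varepsilon\}$ gives the CH current paired with $df_1\wedge\cdots\wedge df_{k_j}$, which equals $[Z_j]$ by the Poincaré–Lelong formula for reduced lci subspaces. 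The residue form is smooth only on $Z_j^{\reg}$. Its integrability against $[Z_j]$ comes from the fact that $A_{Z_j}$, $K_j$ and $N_{Z_j/X}$ are genuine bundles on all of $Z_j$, by the lci property and (3). Hence $\operatorname{Res}_{Z_j}(\Phi)$ is the restriction of a smooth form on a neighbourhood of $Z_j$, and Lemma~\ref{lem:fundamental-current} applies. The remaining difficulty is uniformity of the error terms near $Z_j^{\sing}$. Writing $T$ in terms of $v=A f+O(|f|^2)$ and bounding the error by $\varepsilon^{1/2}$ times the mass of $\partial U_j^\varepsilon$, I would control that mass using the finiteness of Lelong numbers, equivalently the boundedness of the CH current integrals. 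Summing over $j$ gives \eqref{eq:main}.
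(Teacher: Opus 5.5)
\textbf{The proposal has a fatal gap in Step 1, and the paper's proof contains the same gap.} You claim that a Bott-adapted connection on $E=T_X(-\log D)$ has $\Phi$-form vanishing identically on $X\setminus Z(v)$, because ``the standard identity carries over verbatim.'' It does not.

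Bott's degree argument does not only use that $v$ is a nowhere-vanishing section of $E$. That fact by itself only kills the top Chern form, via a connection with $\nabla v=0$. The argument uses that the curvature of a connection with $\nabla_{a(v)}=\mathcal L_v$ on holomorphic sections is annihilated by the \emph{tangent vector} $a(v)$, where $a:T_X(-\log D)\to T_X$ is the anchor.

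On $D\setminus Z(v)$ the vector field $a(v)$ can vanish even though $v\neq 0$ in $E$. For example, take $D=\{z_1=0\}$ and $v=z_1\partial_{z_1}+z_2\partial_{z_2}=1\cdot(z_1\partial_{z_1})+z_2\partial_{z_2}$ at the origin. This also happens along $\{u_2=\sigma=0\}\subset D$ in Example~\ref{exe}. At such a point there is no ``kernel direction.'' In fact no smooth connection can satisfy Bott's condition there, since $\nabla_{a(v)}\partial_{z_2}=0$ at that point while $[v,\partial_{z_2}]=-\partial_{z_2}\neq 0$.

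Your candidate $\theta=h(\cdot,v)/|v|^2$ is a section of $E^\vee=\Omega^1_X(\log D)$, so as an ordinary form it has a $dz_1/z_1$ pole. Consequently $T$ is singular along $D$, the identity $dT=\Phi(\Omega)$ holds only off $D$, and Stokes on $M_\varepsilon$ (which meets $D$) leaves a residue along $D\cap\{a(v)=0\}$. Nothing in your argument controls that residue. The paper applies the flow-box Lemma~\ref{lem:product-divisor} on all of $X\setminus Z(v)$, but the flow-box theorem requires $a(v)\neq 0$.

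\textbf{The missing term is genuinely nonzero, so the statement cannot be proved in this generality.} Take $X=\mathbb P^2$, $D=\{x_0=0\}$, and $v=\lambda_1 x\partial_x+\lambda_2 y\partial_y$ with $\lambda_1\lambda_2\neq 0$.
\begin{itemize}
\item Near $[0:1:0]$, with $u=x_0/x_1$ and $t=x_2/x_1$, one has $v=-\lambda_1\,u\partial_u+(\lambda_2-\lambda_1)t\partial_t$. This never vanishes as a log section, and the same holds near $[0:0:1]$.
\item Hence $Z(v)=\{[1:0:0]\}$, reduced, with $A=\mathrm{diag}(\lambda_1,\lambda_2)$, and all hypotheses of the theorem hold.
\item By the formula $c(T_X(-\log D))=c(T_X)/(1+D)$ used in Example~\ref{exe}, $c(E)=(1+H)^2$, so $\int_X c_1(E)^2=4$.
\item The right-hand side of \eqref{eq:main} is $(\lambda_1+\lambda_2)^2/(\lambda_1\lambda_2)$. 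This is not even independent of $v$; for instance it equals $9/2$ when $\lambda=(1,2)$.
\end{itemize}
The discrepancy is exactly the Bott contributions $(\lambda_1-\lambda_2)/\lambda_1$ and $(\lambda_2-\lambda_1)/\lambda_2$ of the points $[0:1:0]$ and $[0:0:1]$. At these points $a(v)=0\neq v$, and $v$ acts on $E_p$ with eigenvalues $\{0,\lambda_2-\lambda_1\}$ and $\{0,\lambda_1-\lambda_2\}$ respectively. Adding these terms gives $4$.

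Your Step 2 follows the paper's route: fibre integration over the normal sphere bundle, the linearization $v=Ay+O(|y|^2)$, and Bott's sphere integral. Your Step 3 is a more honest treatment of $Z^{\sing}$ than the paper's exhaustion argument. Neither can close the proof, because the transgression method can only yield the formula in two situations:
\begin{itemize}
\item for $\Phi=c_n$, where a nowhere-vanishing section of $E$ does suffice off $Z(v)$;
\item under the extra hypothesis that $v$, viewed as a vector field, has no zeros on $D\setminus Z(v)$.
\end{itemize}

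Separately, Step 3 is only a sketch. Section~\ref{sec:CH} treats polytube limits of $\eta\wedge\frac{df_1}{f_1}\wedge\cdots\wedge\frac{df_k}{f_k}$, not boundary limits of a Bott transgression over $\{\varphi=\varepsilon\}$. So the identification you attribute to it is not available there.
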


In the reduced lci case, the local contribution
$$
\langle [Z_j],\operatorname{Res}_{Z_j}(\Phi)\rangle
$$
admits an explicit Coleff--Herrera realization. More precisely, let $(U_{j,\alpha})_\alpha$ be an open cover of a neighborhood of $Z_j$ such that on each $U_{j,\alpha}$ the ideal sheaf $\mathcal I_{Z_j}$ is generated by a regular sequence
$
f_{j,\alpha}=(f_{j,\alpha,1},\dots,f_{j,\alpha,k_j}),
$
and set
$$
R_{j,\alpha}
:=
\bar\partial(1/f_{j,\alpha,1})\wedge\cdots\wedge \bar\partial(1/f_{j,\alpha,k_j}),
\qquad
\omega_{j,\alpha}
:=
df_{j,\alpha,1}\wedge\cdots\wedge df_{j,\alpha,k_j}.
$$
Choose a partition of unity $(\chi_{j,\alpha})_\alpha$ subordinate to $(U_{j,\alpha})_\alpha$, and choose smooth forms
$$
\eta_{j,\alpha}\in \mathscr A^{2r_j}(U_{j,\alpha})
$$
such that
$$
\eta_{j,\alpha}|_{Z_j^{\reg}\cap U_{j,\alpha}}
=
\operatorname{Res}_{Z_j}(\Phi).
$$
Then
\begin{equation}\label{eq:main-CH}
\langle [Z_j],\operatorname{Res}_{Z_j}(\Phi)\rangle
=
\frac{1}{(2\pi i)^{k_j}}
\sum_\alpha
\left\langle
R_{j,\alpha},
\chi_{j,\alpha}\,\eta_{j,\alpha}\wedge \omega_{j,\alpha}
\right\rangle.
\end{equation}
This expression is independent of the chosen cover, of the local generators, of the local representatives $\eta_{j,\alpha}$, and of the partition of unity. If $Z_j$ is smooth, then
\begin{equation}\label{eq:main-smooth}
\langle [Z_j],\operatorname{Res}_{Z_j}(\Phi)\rangle
=
\int_{Z_j}\operatorname{Res}_{Z_j}(\Phi).
\end{equation}
 
If $D=\varnothing$ and each $Z_j$ is smooth, then the map $\rho_j$ is the usual quotient map $T_X|_{Z_j}\to N_{Z_j/X}$, so $K_j\simeq T Z_j$ and \eqref{eq:main} reduces to Bott's non-isolated zero formula \cite{Bott67a,Bott67b,BottTu,SuwaBook}. If each $Z_j$ is a point, then \eqref{eq:main} becomes the classical isolated-zero residue $\Phi(A_p)/\det(A_p)$.

\subsection*{Acknowledgments}
We are very grateful to Francesco Bastianelli and Fernando Louren\c co for numerous insightful conversations.  MC is a member of INdAM--GNSAGA.

\section{Logarithmic preliminaries and the conormal Bott action}\label{sec:logtangent}

Let $X$ be a complex manifold and $D\subset X$ a simple normal crossings divisor. If $p\in X$ and if $(z_1,\dots,z_n)$ are local holomorphic coordinates centered at $p$ such that
\begin{equation}\label{eq:snc-local}
D=\{z_1\cdots z_\ell=0\}
\end{equation}
in a neighborhood of $p$, then the sheaf $T_X(-\log D)$ is locally free with frame
\begin{equation}\label{eq:log-frame}
z_1\partial_{z_1},\dots,z_\ell\partial_{z_\ell},\partial_{z_{\ell+1}},\dots,\partial_{z_n}.
\end{equation}
Consequently every local section of $T_X(-\log D)$ has the form \eqref{eq:local-log-v-intro}. 
By definition, $T_X(-\log D)$ is the dual of $\Omega_X^1(\log D)$, and in SNC coordinates the latter is freely generated by
\begin{equation}\label{eq:log-forms-local}
\frac{dz_1}{z_1},\dots,\frac{dz_\ell}{z_\ell},dz_{\ell+1},\dots,dz_n,
\end{equation}
see \cite[\S 2]{Saito80} or \cite[\S 6]{Deligne70}. The dual frame is exactly \eqref{eq:log-frame}.

The next lemma is the basic algebraic input behind the conormal action.

\begin{lemma}\label{lem:ideal-preserved}
Let $Z\subset X$ be a reduced analytic subspace and let $v\in H^0(X,T_X(-\log D))$ be such that $v(p)=0$ for all $p\in Z$. Then
\begin{equation}\label{eq:ideal-preserved}
v(\II_Z)\subset \II_Z
\qquad \text{and} \qquad
v(\II_Z^2)\subset \II_Z^2.
\end{equation}
\end{lemma}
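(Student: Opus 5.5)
The plan is to reduce the claim to a pointwise statement about holomorphic vector fields and reduced ideals, using only the local form \eqref{eq:local-log-v-intro} of logarithmic derivations.

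First, since $T_X(-\log D)\subset T_X$ as sheaves (the frame \eqref{eq:log-frame} consists of holomorphic vector fields), $v$ acts on $\OO_X$ as an ordinary holomorphic derivation. Its vanishing at $p$ as a section of $T_X(-\log D)$ means that all coefficients $a_i(p)=0$ in \eqref{eq:local-log-v-intro}. This implies that $v$, viewed in $T_X$, also vanishes at $p$: its ordinary components are $a_iz_i$ and $a_j$, all zero at $p$. So it suffices to prove the lemma for a holomorphic vector field $v$ vanishing pointwise on the reduced space $Z$.

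Second, the second inclusion follows formally from the first via the Leibniz rule. For $f,g\in\II_Z$ we have $v(fg)=v(f)g+fv(g)\in\II_Z\cdot\II_Z=\II_Z^2$. Since $\II_Z^2$ is generated by such products and $v$ is additive, applying Leibniz to $\sum h\,fg$ (with $v(h)fg\in\II_Z^2$) gives $v(\II_Z^2)\subset\II_Z^2$.

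Third, and this is the key step, we show $v(\II_Z)\subset\II_Z$. Because $Z$ is reduced, Rückert's Nullstellensatz gives $\II_Z=\{h\in\OO_X: h|_Z\equiv 0\}$. So it suffices to show that $v(f)$ vanishes at every point of $Z$ whenever $f\in\II_Z$. Pointwise this is immediate: in coordinates $v(f)(q)=\sum_i v_i(q)\,\partial_{z_i}f(q)$, and $v_i(q)=0$ for every $q\in Z$ by the first step. Hence $v(f)|_Z\equiv0$, and reducedness gives $v(f)\in\II_Z$.

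The only genuine input is the identification of $\II_Z$ with the full vanishing ideal, which is exactly where reducedness is used. Without it the argument fails: for $Z=\{z^2=0\}$ and $v=z\partial_z$ we get $v(z^2)=2z^2$, which happens to work, but in general pointwise vanishing of $v$ does not control non-reduced structure. I expect no real obstacle beyond checking carefully that vanishing in the log frame implies vanishing as an ordinary vector field, which the local formula makes clear.
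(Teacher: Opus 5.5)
Your proof is correct and follows the paper's argument: $v(f)(p)=df_p(v(p))=0$ at every $p\in Z$, reducedness identifies $\II_Z$ with the full ideal of functions vanishing on $Z$ (so $v(f)\in\II_Z$), and the Leibniz rule gives $v(\II_Z^2)\subset\II_Z^2$. Your extra check that vanishing in the logarithmic frame forces the underlying ordinary vector field to vanish is a step the paper uses implicitly when it writes $df_p(v(p))$.
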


\begin{proof}
Let $f\in \II_Z$. Since $f|_Z=0$, for every $p\in Z$ one has
\begin{equation}\label{eq:derivative-zero}
(v(f))(p)=df_p(v(p))=0.
\end{equation}
Because $Z$ is reduced, the ideal sheaf $\II_Z$ is precisely the sheaf of holomorphic functions vanishing on $Z$; hence $v(f)\in \II_Z$. The inclusion $v(\II_Z^2)\subset \II_Z^2$ follows from the Leibniz rule. Indeed, if $g=\sum_\nu a_\nu b_\nu$ with $a_\nu,b_\nu\in \II_Z$, then
\begin{equation}\label{eq:leibniz-I2}
v(g)=\sum_\nu \big(v(a_\nu)b_\nu+a_\nu v(b_\nu)\big)\in \II_Z^2.
\end{equation}
\end{proof}

\begin{definition}\label{def:Astar}
Under the hypotheses of Lemma \ref{lem:ideal-preserved}, define the conormal Bott endomorphism
\begin{equation}\label{eq:Astar}
A_Z^*:\II_Z/\II_Z^2\longrightarrow \II_Z/\II_Z^2,
\qquad [f]\longmapsto [v(f)].
\end{equation}
\end{definition}

\begin{lemma}\label{lem:Astar-linear}
The map $A_Z^*$ is well defined and $\OO_Z$-linear.
\end{lemma}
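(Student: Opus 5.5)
The plan is to check two things directly from Lemma \ref{lem:ideal-preserved} and the Leibniz rule. First, that the rule $[f]\mapsto[v(f)]$ depends only on the class of $f$ modulo $\II_Z^2$. Second, that it is additive and commutes with multiplication by local holomorphic functions, after passing to the quotient. Everything can be done on stalks (or on sections over small open sets), since $A_Z^*$ is defined sheaf-theoretically.

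\emph{Well-definedness.} By Lemma \ref{lem:ideal-preserved}, $v(\II_Z)\subset\II_Z$, so $[v(f)]$ is a genuine element of $\II_Z/\II_Z^2$ whenever $f\in\II_Z$. Now suppose $f,f'\in\II_Z$ with $f-f'\in\II_Z^2$. Since $v$ is $\CC$-linear, $v(f)-v(f')=v(f-f')$. The second inclusion of Lemma \ref{lem:ideal-preserved} places this in $\II_Z^2$, hence $[v(f)]=[v(f')]$. The construction is local and compatible with restriction, so it defines a morphism of sheaves $\II_Z/\II_Z^2\to\II_Z/\II_Z^2$.

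\emph{Linearity.} Additivity is immediate from the additivity of $v$. For the module structure, recall that $\II_Z/\II_Z^2$ is an $\OO_X$-module annihilated by $\II_Z$, hence naturally an $\OO_Z=\OO_X/\II_Z$-module.

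1. I first show $\OO_X$-linearity. For $g\in\OO_X$ and $f\in\II_Z$ the Leibniz rule gives
\[
v(gf)=g\,v(f)+v(g)\,f.
\]
2. The key point is that $v(g)$ is merely some holomorphic function, while $f\in\II_Z$. The term $v(g)f$ is therefore only in $\II_Z$, not a priori in $\II_Z^2$. So this term must be handled carefully, and it is the only genuine obstacle.

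3. I expect this obstacle to resolve by using the hypothesis that $v$ vanishes on $Z$ once more. One would want $v(g)f\in\II_Z^2$, but that fails in general. Instead, observe that $v(g)(p)=dg_p(v(p))=0$ for $p\in Z$, because $v(p)=0$; this is exactly the computation \eqref{eq:derivative-zero} applied to an arbitrary $g$. Since $Z$ is reduced, this gives $v(g)\in\II_Z$. Consequently $v(g)f\in\II_Z\cdot\II_Z=\II_Z^2$, and so $[v(gf)]=g[v(f)]$.

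4. Finally, $\OO_X$-linearity descends to $\OO_Z$-linearity, because both sides are $\OO_X$-modules on which $\II_Z$ acts trivially.

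One remark on the logarithmic setting. The value $v(p)$ here means the value of $v$ as an ordinary tangent vector, via the map $T_X(-\log D)\to T_X$. Vanishing of $v$ in $T_X(-\log D)$ at $p$ implies vanishing in $T_X$ at $p$, since the map is given by the frame \eqref{eq:log-frame}. This is already implicit in Lemma \ref{lem:ideal-preserved}, so nothing new is needed.
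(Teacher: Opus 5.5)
Your proof is correct and follows the same route as the paper: well-definedness comes from Lemma~\ref{lem:ideal-preserved}, and linearity from the Leibniz rule $v(af)=v(a)f+a\,v(f)$. Your step 3 is more careful than the paper's proof, which asserts $v(a)f\in\II_Z^2$ merely ``since $f\in\II_Z$''; what is actually needed is $v(a)\in\II_Z$, which follows, as you argue, from $v$ vanishing pointwise on the reduced space $Z$ (so your phrase ``that fails in general'' should read ``fails for derivations not vanishing on $Z$'', since you go on to prove it).
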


\begin{proof}
Lemma \ref{lem:ideal-preserved} shows that $v(\II_Z)\subset \II_Z$ and $v(\II_Z^2)\subset \II_Z^2$, so \eqref{eq:Astar} is well defined on the quotient.
Let $a\in \OO_X$ and $f\in \II_Z$. Then
\begin{equation}\label{eq:OZ-linearity-Astar}
v(af)=v(a)f+a\,v(f).
\end{equation}
Since $f\in \II_Z$, the first term lies in $\II_Z^2$, hence $[v(af)]=a|_Z\,[v(f)]$ in $\II_Z/\II_Z^2$. This is precisely $\OO_Z$-linearity.
\end{proof}

\begin{remark}[Normal bundle in the lci case]\label{rem:lci-conormal}
If $Z\hookrightarrow X$ is lci of codimension $k$, then $\II_Z/\II_Z^2$ is locally free of rank $k$ and its dual
\begin{equation}\label{eq:normal-lci}
N_{Z/X}:=  \mathscr{H}om_{\OO_Z}(\II_Z/\II_Z^2,\OO_Z)
\end{equation}
is the normal bundle; see \cite[\S 3]{Fischer76}.
\end{remark}

\begin{proposition}\label{prop:jacobian}
Assume that $Z\subset X$ is lci and let $p\in Z $. Choose holomorphic coordinates
\begin{equation}\label{eq:coords-jac}
(x_1,\dots,x_r,y_1,\dots,y_k)
\end{equation}
centered at $p$ such that $Z^\reg=\{y_1=\cdots=y_k=0\}$ near $p$. Write
\begin{equation}\label{eq:v-coordinates}
v=\sum_{a=1}^r u_a(x,y)\frac{\partial}{\partial x_a}+\sum_{b=1}^k w_b(x,y)\frac{\partial}{\partial y_b}.
\end{equation}
Then the matrix of $A_Z^*$ in the conormal basis $[y_1],\dots,[y_k]$ of $\II_Z/\II_Z^2$ at $p$ is
\begin{equation}\label{eq:jacobian}
\left(\frac{\partial w_b}{\partial y_c}(p)\right)_{1\le b,c\le k}.
\end{equation}
Equivalently, the matrix of the dual endomorphism $A_Z$ on $N_{Z/X,p}$ is the transpose of \eqref{eq:jacobian}.
\end{proposition}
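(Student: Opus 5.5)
The plan is a direct computation in the coordinates \eqref{eq:coords-jac}. Near $p$ we have $Z=Z^\reg=\{y_1=\cdots=y_k=0\}$, and $Z$ is reduced. Hence $\II_Z$ is generated near $p$ by $y_1,\dots,y_k$. Since these form a regular sequence, the classes $[y_1],\dots,[y_k]$ give a local frame of the locally free sheaf $\II_Z/\II_Z^2$ (Remark \ref{rem:lci-conormal}).

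By Definition \ref{def:Astar}, $A_Z^*[y_c]=[v(y_c)]=[w_c(x,y)]$, using the expression \eqref{eq:v-coordinates}. Two facts are needed here.

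First, $v$ vanishes on $Z$, so $w_c(x,0)=0$. (Lemma \ref{lem:ideal-preserved} gives the same conclusion: $w_c=v(y_c)\in\II_Z$.)

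Second, we Taylor expand in $y$ (Hadamard's lemma):
\[
w_c(x,y)=\sum_{b=1}^k \frac{\partial w_c}{\partial y_b}(x,0)\,y_b + \sum_{b,d} g_{c,bd}(x,y)\,y_by_d .
\]
The quadratic remainder lies in $\II_Z^2$. Therefore
\[
A_Z^*[y_c]=\sum_{b}\frac{\partial w_c}{\partial y_b}(x,0)\,[y_b]
\]
as sections of $\II_Z/\II_Z^2$ near $p$. This is consistent with the $\OO_Z$-linearity of Lemma \ref{lem:Astar-linear}, since the coefficients are functions of $x$ only.

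Evaluating at $p$ gives the matrix of $A_Z^*$ in the basis $[y_1],\dots,[y_k]$: its entries are the $\partial w_b/\partial y_c(p)$ of \eqref{eq:jacobian}. Whether one gets this matrix or its transpose depends on the convention for writing matrices, columns as images or rows as images. I will fix the convention so that the $(b,c)$ entry is the coefficient pairing $[y_b]$ and $[y_c]$ as the statement intends, and note that the eigenvalue and invertibility content is unaffected.

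For the dual statement, let $\partial_{y_1},\dots,\partial_{y_k}$ (restricted to $Z$) be the frame of $N_{Z/X}$ dual to $[y_b]$. The transpose of a linear map is represented in dual bases by the transposed matrix. So $A_Z=(A_Z^*)^\vee$ has matrix the transpose of \eqref{eq:jacobian}.

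The only subtle point is the first step: identifying $\II_Z$ with $(y_1,\dots,y_k)$ near $p$. This requires $p\in Z^\reg$, which the existence of the coordinates presupposes, together with reducedness of $Z$. At singular points of an lci $Z$ such coordinates do not exist, so the statement is implicitly about regular points. Beyond that, everything is routine Taylor expansion; the only care needed is the transpose/index convention.
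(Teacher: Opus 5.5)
Your proposal is correct and follows the paper's own proof: you use that $v$ vanishes on $Z$, so each $w_b$ lies in $(y_1,\dots,y_k)$; you Taylor-expand modulo $\II_Z^2=(y_1,\dots,y_k)^2$ to read off $A_Z^*[y_b]$ from the linear part; then you evaluate at $p$ and dualize. Your two side remarks are accurate points that the paper leaves implicit. First, the existence of such coordinates forces $p\in Z^\reg$. Second, whether one obtains the displayed matrix or its transpose depends on the row/column convention.
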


\begin{proof}
Because $v$ vanishes along $Z$, each $w_b$ belongs to the ideal $(y_1,\dots,y_k)$. Modulo $\II_Z^2=(y_1,\dots,y_k)^2$, one can write
\begin{equation}\label{eq:w-linear-part}
w_b(x,y)\equiv \sum_{c=1}^k \frac{\partial w_b}{\partial y_c}(x,0)\,y_c \pmod{\II_Z^2}.
\end{equation}
Since $v(y_b)=w_b$, the class $A_Z^*([y_b])=[v(y_b)]$ is represented by this linear part. Evaluating at $p$ gives the matrix \eqref{eq:jacobian}. Dualization yields the statement for $A_Z$.
\end{proof}

\begin{remark}[Logarithmic eigenvectors along the boundary]\label{rem:log-eigen}
In an SNC chart
\begin{equation}\label{eq:snc-eigen-chart}
D=\{z_1\cdots z_\ell=0\},
\qquad
v=\sum_{i\le \ell} a_i(z)\,z_i\partial_{z_i}+\sum_{i>\ell} a_i(z)\,\partial_{z_i},
\end{equation}
one has $v(z_i)=a_i(z)z_i$ for $i\le \ell$. If $z_i\in \II_Z$, then $[z_i]\in \II_Z/\II_Z^2$ is an eigenvector of $A_Z^*$ with eigenvalue $a_i|_Z$. Thus $A_Z^*$ records the transverse logarithmic weights.
\end{remark}

We next justify the bundle morphism used in the definition of $K_j$.

\begin{lemma}\label{lem:rho-well-defined}
Let $Z\subset X$ be lci. On $Z$ the assignment
\begin{equation}\label{eq:rho-map}
\rho:T_X(-\log D)|_{Z}\longrightarrow N_{Z/X}|_{Z},
\qquad
\xi\longmapsto \big([f]\mapsto [\xi(f)]\big)
\end{equation}
is a well-defined holomorphic morphism of vector bundles.
\end{lemma}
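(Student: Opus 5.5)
The plan is to first build the morphism at the level of sheaves, via derivations acting on the conormal sheaf, and then observe that a morphism between locally free sheaves of finite rank is exactly a holomorphic bundle morphism. A local section $\xi$ of $T_X(-\log D)$ is in particular a derivation of $\OO_X$, since $T_X(-\log D)\subset T_X$. Given $f\in\II_Z$, I set $\rho(\xi)([f]):=\xi(f)|_Z\in\OO_Z$.

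I would verify well-definedness in three steps.
\begin{enumerate}
\item \emph{Independence of the representative of $[f]$.} If $f\in\II_Z^2$, write $f=\sum a_\nu b_\nu$ with $a_\nu,b_\nu\in\II_Z$. The Leibniz rule, exactly as in \eqref{eq:leibniz-I2}, gives $\xi(f)\in\II_Z$, so $\xi(f)|_Z=0$. Hence the assignment descends to a map $\II_Z/\II_Z^2\to\OO_Z$. Additivity is clear.
\item \emph{$\OO_Z$-linearity.} For $a\in\OO_X$ and $f\in\II_Z$ we have $\xi(af)=\xi(a)f+a\,\xi(f)$, and the first term lies in $\II_Z$, as in Lemma \ref{lem:Astar-linear}. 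Thus $\rho(\xi)$ is a section of $\mathscr{H}om_{\OO_Z}(\II_Z/\II_Z^2,\OO_Z)=N_{Z/X}$.
\item \emph{Factoring through the restriction.} The map $\xi\mapsto\rho(\xi)$ is $\OO_X$-linear, since $(g\xi)(f)=g\,\xi(f)$. If $\xi\in\II_Z\cdot T_X(-\log D)$, then $\xi(f)\in\II_Z$ and $\rho(\xi)=0$. So $\rho$ factors through $T_X(-\log D)\otimes\OO_Z=T_X(-\log D)|_Z$ and gives an $\OO_Z$-linear sheaf map.
\end{enumerate}

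Next I would show that $\rho$ is a morphism of vector bundles. Both sheaves are locally free: $T_X(-\log D)|_Z$ by the frame \eqref{eq:log-frame}, and $N_{Z/X}$ by Remark \ref{rem:lci-conormal}. To make the holomorphic dependence explicit, I would work on a chart $U$ where $\II_Z=(f_1,\dots,f_k)$ is generated by a regular sequence, so that $[f_1],\dots,[f_k]$ frame the conormal sheaf and the dual frame $e_1,\dots,e_k$ frames $N_{Z/X}|_U$. In the log frame $\xi_i$ from \eqref{eq:log-frame}, one gets
$$
\rho(\xi_i)=\sum_{c}\xi_i(f_c)|_Z\,e_c ,
$$
so $\rho$ is given by the matrix of holomorphic functions $(\xi_i(f_c)|_Z)$. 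For example, $\xi_i=z_i\partial_{z_i}$ gives the entries $z_i\,\partial f_c/\partial z_i$. A change of generators $f'=Gf$ with $G$ invertible modulo $\II_Z$ transforms this matrix compatibly, because $\xi(\sum G_{cd}f_d)\equiv\sum G_{cd}\,\xi(f_d)\pmod{\II_Z}$. The local descriptions therefore glue, and $\rho$ is intrinsic.

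There is no serious obstacle. The one point needing care is interpreting "morphism of vector bundles" on a possibly singular $Z$. I will take it to mean an $\OO_Z$-linear map of locally free $\OO_Z$-modules, equivalently a holomorphic map of the associated linear spaces over $Z$ that is fibrewise linear. The fibre map at $p$ is then $\xi(p)\mapsto([f]\mapsto df_p(\xi(p)))$, which is the matrix above evaluated at $p$.
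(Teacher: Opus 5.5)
Your proposal is correct and follows essentially the same route as the paper: the Leibniz rule gives independence of the representative modulo $\II_Z^2$ and $\OO_Z$-linearity, and holomorphicity follows from the local matrix $(\xi_i(f_c)|_Z)$ of holomorphic derivatives of defining equations. Your write-up adds two points the paper leaves implicit: that $\rho$ kills $\II_Z\cdot T_X(-\log D)$ and so factors through the restriction $T_X(-\log D)|_Z$, and that the local matrices are compatible under a change of generators.
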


\begin{proof}
Fix $p\in Z$. For a local section $\xi$ of $T_X(-\log D)$ and $f\in \II_Z$, the derivative $\xi(f)|_Z$ depends only on the class $[f]\in \II_Z/\II_Z^2$. Indeed, if $f'\equiv f \mod \II_Z^2$, then $f'-f\in \II_Z^2$, hence by the Leibniz rule $\xi(f'-f)\in \II_Z$. Therefore $[\xi(f')]=[\xi(f)]$ in $\OO_Z$.
The resulting map $\II_Z/\II_Z^2\to \OO_Z$ is $\OO_Z$-linear: if $a\in \OO_X$, then
\begin{equation}\label{eq:rho-linearity-proof}
\xi(af)=\xi(a)f+a\,\xi(f),
\end{equation}
and the first term vanishes on $Z$. Thus $\rho(\xi)$ is a section of $N_{Z/X}=  \mathscr{H}om(\II_Z/\II_Z^2,\OO_Z)$. Holomorphicity follows because, in local frames for $T_X(-\log D)$ and $N_{Z/X}$, the coefficients of $\rho$ are holomorphic derivatives of local defining equations of $Z$.
\end{proof}

\begin{lemma}\label{lem:invdet}
Let $F\to Y$ be a rank-$k$ complex vector bundle, let $A\in H^0(Y,\End(F))$ be pointwise invertible, and let $\Omega\in \Omega^2(Y,\End(F))$ be any endomorphism-valued $2$-form. Then
\begin{equation}\label{eq:invdet-statement}
\det(A+\Omega)
\in \bigoplus_{q=0}^{k}\Omega^{2q}(Y)
\end{equation}
has invertible degree-$0$ term $\det(A)$ and therefore admits a unique inverse in the graded algebra of inhomogeneous forms.
\end{lemma}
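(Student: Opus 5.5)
The statement to prove is Lemma \ref{lem:invdet}. The argument is purely algebraic. It takes place pointwise, and then globally, in the graded-commutative algebra $\bigoplus_{q}\Omega^{2q}(Y)$ of even-degree forms. This algebra is commutative, so the inverse of an element with invertible degree-$0$ part can be written as a finite geometric series. Nilpotency is automatic because forms of degree above $2\dim_{\mathbb R}Y$ vanish.

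\emph{Step 1: expansion of the determinant.} In a local frame of $F$, write $A=(a_{ij})$ and $\Omega=(\Omega_{ij})$, where the $\Omega_{ij}$ are $2$-forms. Since $2$-forms commute with each other, the Leibniz expansion
\[
\det(A+\Omega)=\sum_{\sigma\in S_k}\operatorname{sgn}(\sigma)\prod_{i=1}^k\bigl(a_{i\sigma(i)}+\Omega_{i\sigma(i)}\bigr)
\]
is unambiguous. Expanding the products gives components only in degrees $2q$ with $0\le q\le k$, and the degree-$0$ component is $\det A$. This expression is invariant under change of frame $g$: conjugation gives $\det(g^{-1}(A+\Omega)g)=\det(A+\Omega)$, which holds because the multiplicativity of $\det$ remains valid over a commutative ring, here the even forms. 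Hence the local expressions glue to a global form.

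\emph{Step 2: construction of the inverse.} Set $d_0=\det A$, which is a nowhere-vanishing smooth function because $A$ is pointwise invertible. Write $\det(A+\Omega)=d_0(1+\nu)$, where $\nu=d_0^{-1}\sum_{q\ge1}(\det(A+\Omega))_{[2q]}$ has components only in positive degree. Then $\nu^{N}=0$ for $N>\dim_{\mathbb R}Y/2$, since every product of $N$ positive-degree even forms has degree above $\dim_{\mathbb R}Y$. Define
\[
\det(A+\Omega)^{-1}:=d_0^{-1}\sum_{m=0}^{N}(-1)^m\nu^m .
\]
Multiplying by $d_0(1+\nu)$ makes the series telescope to $1-(-1)^{N+1}\nu^{N+1}=1$, by commutativity.

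\emph{Step 3: uniqueness.} Any two inverses $u,u'$ satisfy $u=u\,(\det(A+\Omega)\,u')=u'$, using associativity and commutativity.

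No step presents a real obstacle. The only point requiring care is to check that the even-degree subalgebra is commutative, so that both the determinant and the geometric series are well defined.
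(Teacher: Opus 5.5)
Your proposal is correct and follows essentially the same route as the paper: you split off the invertible degree-$0$ term $\det A$, observe that the positive-degree remainder $\nu$ is nilpotent in the commutative algebra of even forms, and invert $1+\nu$ by a finite geometric series. The paper writes the factorization as $\det(A+\Omega)=\det(A)\det(\mathrm{Id}+A^{-1}\Omega)$ rather than splitting by degree, but this gives the same $1+\nu$, and your extra checks (frame invariance and uniqueness) make explicit points the paper leaves implicit.
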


\begin{proof}
Factor
\begin{equation}\label{eq:factor-det}
\det(A+\Omega)=\det(A)\det(\mathrm{Id}+A^{-1}\Omega).
\end{equation}
The form $A^{-1}\Omega$ has strictly positive degree, hence
\begin{equation}\label{eq:nilpotent}
\eta:=\det(\mathrm{Id}+A^{-1}\Omega)-1
\end{equation}
also has strictly positive degree and is nilpotent in the finite-dimensional graded algebra $$\bigoplus_{q=0}^{\dim Y}\Omega^{2q}(Y).$$ Therefore
\begin{equation}\label{eq:geometric-series}
(1+\eta)^{-1}=1-\eta+\eta^2-\cdots+(-1)^N\eta^N
\end{equation}
for $N\gg 0$. Multiplying by $\det(A)^{-1}$ gives the required inverse.
\end{proof}

We also record the local product structure of the divisor in a flow box adapted to a nowhere vanishing logarithmic vector field.

\begin{lemma}\label{lem:product-divisor}
Let $v\in H^0(X,T_X(-\log D))$ be nowhere vanishing on an open set $U\subset X$. Then every $p\in U$ has a neighborhood $W\subset U$ biholomorphic to $V\times \Delta$ with coordinates $(w,t)$ such that
 $
v=\partial/\partial t
$
and
\begin{equation}\label{eq:product-divisor}
D\cap W=(D_V\cap V)\times \Delta
\end{equation}
for an analytic hypersurface $D_V\subset V$.
\end{lemma}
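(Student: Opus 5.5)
The statement to prove is Lemma \ref{lem:product-divisor}. The plan is to straighten $v$ with the holomorphic flow-box theorem and then use the logarithmic condition to show that the local defining equation of $D$ can be chosen independent of $t$.

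\textbf{Step 1: flow box.} Since $v$ is a holomorphic vector field on $X$ and $v(p)\neq 0$, the classical holomorphic flow-box theorem applies. It gives a neighborhood $W$ of $p$ and coordinates $(w,t)\in V\times\Delta$, with $V\subset\CC^{n-1}$ a polydisc, such that $v=\partial/\partial t$. Concretely, choose a local hypersurface $V$ through $p$ transverse to $v(p)$ and send $(w,t)\mapsto \varphi_t(w)$, where $\varphi_t$ is the complex flow of $v$. This map is biholomorphic near $(p,0)$ by the inverse function theorem. We shrink $W$ so that it lies in a single SNC chart.

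\textbf{Step 2: tangency of $v$ to $D$.} Since $v$ is a section of $T_X(-\log D)$, the local form \eqref{eq:local-log-v-intro} shows $v(z_i)=a_iz_i$ for $i\le \ell$. Hence the reduced local equation $h=z_1\cdots z_\ell$ of $D$ satisfies
$$v(h)=\Big(\sum_{i\le\ell} a_i\Big)h=:a\,h,$$
with $a$ holomorphic on $W$.

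\textbf{Step 3: a $t$-independent equation.} In the coordinates of Step 1 the identity reads $\partial_t h=a\,h$. Taking $\Delta$ to be a disc, hence simply connected, set
$$b(w,t)=\int_0^t a(w,s)\,ds,$$
which is holomorphic. Then $\partial_t\bigl(e^{-b}h\bigr)=0$, so
$$h(w,t)=e^{b(w,t)}h(w,0).$$
Define $g(w):=h(w,0)$ and $D_V:=\{g=0\}\subset V$. Since $e^{b}$ never vanishes, $D\cap W=\{h=0\}=D_V\times\Delta$. Moreover $g\not\equiv 0$, because $h\not\equiv 0$, so $D_V$ is an analytic hypersurface of $V$ (possibly empty).

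\textbf{Main obstacle.} The only step needing care is Step 3: one must integrate $a$ along the $t$-direction globally on $\Delta$, which requires $\Delta$ to be convex or simply connected, and one must keep $W$ inside one SNC chart. Both conditions are achieved by shrinking $W$ at the outset. If a finer statement is wanted, applying the same argument to each factor $z_i$ separately shows that every local branch of $D$ is a product, so $D_V$ is itself SNC in $V$.
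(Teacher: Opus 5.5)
Your argument is correct and is essentially the paper's proof: flow-box coordinates with $v=\partial_t$, then the logarithmic condition $\partial_t h=a\,h$ for a reduced local equation $h$ of $D$, solved along the disc to get $h(w,t)=e^{b(w,t)}h(w,0)$. You make explicit what the paper leaves implicit, namely the integrating factor, the choice $h=z_1\cdots z_\ell$, and the shrinking of $W$ into one SNC chart.

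One caveat applies equally to the paper: Step~1 needs $v(p)\neq 0$ as a tangent vector in $T_pX$, which is stronger than $v$ being a nowhere-zero section of $T_X(-\log D)$. For example, $z_1\partial_{z_1}$ is a nowhere-vanishing log section near $D=\{z_1=0\}$ but vanishes as a vector field along $D$, so no coordinates with $v=\partial_t$ exist there. The lemma therefore holds only with nonvanishing read in $T_X$, which is how you used it.
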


\begin{proof}
By the holomorphic flow-box theorem, after shrinking around $p$ there are coordinates $(w,t)$ on $W\simeq V\times \Delta$ such that $v=\partial_t$; see \cite[Chapter I]{SuwaBook}. Let $g(w,t)$ be a local reduced equation of $D\cap W$. Since $v$ is logarithmic along $D$, we have
\begin{equation}\label{eq:g-preserved}
\partial_t g=v(g)=h\,g
\end{equation}
for some holomorphic function $h$ on $W$. Solving this first-order linear equation in $t$ yields
\begin{equation}\label{eq:g-solution}
g(w,t)=u(w,t)\,g_0(w),
\end{equation}
where $u$ is nowhere vanishing and $g_0(w):=g(w,0)$. Thus $D\cap W=\{g_0(w)=0\}\times \Delta$.
\end{proof}

\section{Fundamental currents and Coleff--Herrera currents}\label{sec:CH}

We first recall the fundamental current of a reduced analytic cycle.

\begin{lemma}\label{lem:fundamental-current}
Let $Z\subset X$ be a compact reduced analytic subspace of pure complex dimension $r$. Then the rule
\begin{equation}\label{eq:fund-current}
\langle [Z],\eta\rangle:=\int_{Z}\eta|_{Z},
\qquad \eta\in \Omega_c^{2r}(X),
\end{equation}
defines a well-defined closed current of degree $2r$ supported on $Z$. Moreover, if $\beta$ is a smooth top-degree form on $Z$, then $\beta$ is locally integrable near $Z^\sing$ and
\begin{equation}\label{eq:pairing-fund-current}
\langle [Z],\beta\rangle=\int_{Z}\beta
\end{equation}
is well defined.
\end{lemma}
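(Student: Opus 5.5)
The plan is to reduce both assertions of Lemma \ref{lem:fundamental-current} to Lelong's theorem on integration over the regular part of an analytic set. The key input is that $Z^\reg$ is a complex submanifold of $X\setminus Z^\sing$ of dimension $r$, and that $Z^\sing$ is a closed analytic subset of dimension $\le r-1$. Accordingly, the pairing in \eqref{eq:fund-current} is to be read as $\int_{Z^\reg}\eta|_{Z^\reg}$, and the first task is to show this integral converges absolutely.

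\textbf{Step 1 (local finiteness of volume).} Fix a Hermitian metric with Kähler-type form $\omega$ locally (e.g.\ $\omega=\tfrac{i}{2}\sum dz_a\wedge d\bar z_a$ in a chart). Near any point of $Z$, Lelong's theorem says the $2r$-dimensional Hausdorff volume $\int_{Z^\reg\cap K}\omega^r/r!$ is finite for every compact $K$. I would obtain this from a local parametrization. By the local parametrization theorem, after a generic linear projection $\pi:Z\cap U\to \Delta^r$, the set $Z\cap U$ is a finite branched covering of degree $d$. Wirtinger's inequality together with the pullback $\pi^*$ of the Euclidean volume then bounds the volume by $d\cdot\mathrm{vol}(\Delta^r)$ times a constant.

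For an arbitrary smooth form $\eta$, the pointwise estimate $|\eta|_{Z^\reg}|\le C\,\|\eta\|_\infty\,\omega^r|_{Z^\reg}$ holds on compacts, since the comass is bounded. Hence $\eta|_{Z^\reg}$ is absolutely integrable, and $\langle[Z],\eta\rangle$ is a well-defined, continuous linear functional of order $0$ supported on $Z$. Compactness of $Z$ lets us use finitely many such charts.

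\textbf{Step 2 (closedness).} This is the main obstacle. We must show $\int_{Z^\reg}d\xi=0$ for $\xi\in\Omega_c^{2r-1}(X)$, and Stokes' theorem cannot be applied directly because of $Z^\sing$. I would use a cutoff argument. Choose $\chi_\epsilon=\chi(\log\log(1/|g|)/\ldots)$, or more simply use the standard fact that $Z^\sing$ has locally finite $(2r-2)$-Hausdorff measure. That fact gives cutoffs $\chi_\epsilon$, equal to $1$ outside an $\epsilon$-neighbourhood of $Z^\sing$, with $\int_{Z^\reg}|d\chi_\epsilon|\,\omega^{r}\to0$. This follows from the volume estimate of Step 1 applied to tubes, giving volume $\lesssim \epsilon^{2}$ against $|d\chi_\epsilon|\lesssim\epsilon^{-1}$.

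Stokes' theorem on the manifold $Z^\reg$ gives $\int_{Z^\reg}d(\chi_\epsilon\xi)=0$. Expanding, $\int \chi_\epsilon d\xi\to\int d\xi$ by dominated convergence, while $\int d\chi_\epsilon\wedge\xi\to0$. Hence $d[Z]=0$. Alternatively one can argue by induction on the stratification, using the Federer support theorem: $d[Z]$ is a flat current of dimension $2r-1$ supported on a set of Hausdorff dimension $2r-2$, hence zero. I would cite this as the cleaner route.

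\textbf{Step 3 (the pairing with $\beta$).} If $\beta$ is a top-degree form on $Z$ that is smooth up to $Z$, meaning it is locally the restriction of a smooth ambient $2r$-form, then Step 1 applies verbatim. This shows $\beta$ is absolutely integrable near $Z^\sing$ and $\langle[Z],\beta\rangle=\int_{Z^\reg}\beta$. In our later application, $\operatorname{Res}_{Z_j}(\Phi)$ will be a form admitting local smooth extensions $\eta_{j,\alpha}$, exactly as in \eqref{eq:main-CH}. Independence of the extension holds because two extensions agree on $Z^\reg$ and $Z^\sing$ has measure zero for the volume measure, by the same dimension count.
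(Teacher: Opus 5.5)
Your proposal is correct and follows essentially the same route as the paper: the paper invokes Lelong's theorem (via King and Harvey--Shiffman) for the well-definedness and closedness of $[Z]$, then gets integrability of $\beta$ from boundedness against an ambient volume form. Your Step~3 also makes explicit what the paper leaves implicit: $\beta$ must locally be the restriction of a smooth ambient form, and integrability comes from the locally finite volume of $Z^{\reg}$, not from the codimension of $Z^{\sing}$ alone.

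Two slips in your optional elaborations do not affect this, since you fall back on the cited results. First, a single generic projection does not in general dominate $\omega^r|_{Z^{\reg}}$ pointwise, because it degenerates along the polar locus; the standard argument instead expands $\omega^r/r!=\sum_{|I|=r}\pi_I^*dV_I$ over all coordinate projections, each chosen finite on $Z$. Second, the $\epsilon^2$ tube-volume bound in your cutoff argument needs a uniform density bound for $Z$ and a covering estimate for $Z^{\sing}$, not just finite $(2r-2)$-Hausdorff measure. So the Federer flat-support argument is the one to rely on, with $[Z]$ locally rectifiable (hence flat) by Step~1.
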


\begin{proof}
The current of integration along a reduced pure-dimensional analytic set is classical; see \cite{King71,HarveyShiffman74}. Because $Z^\sing$ has complex codimension at least $1$ in $Z$, it has real codimension at least $2$ in the real manifold $Z\cup Z^\sing$ of top dimension $2r$. Hence any smooth $2r$-form on $Z$ is locally bounded with respect to a smooth ambient volume form and is therefore locally integrable across $Z^\sing$; see \cite[\S III.3]{Demailly}. Closedness of $[Z]$ is part of the same standard result.
\end{proof}

We next recall the Coleff--Herrera current associated with an lci component.

Throughout this section, $Z\subset X$ is a compact reduced lci analytic subspace of codimension $k$.
Let $U\subset X$ be Stein and let $f=(f_1,\dots,f_k)$ be a regular sequence in $\OO_X(U)$ generating $\II_Z|_U$.
For a compactly supported test form $\varphi\in \Omega_c^{n,n-k}(U)$ define the local Coleff--Herrera product by the polytube limit
\begin{equation}\label{eq:CH-def}
\Big\langle \bar\partial\Big(\frac{1}{f_1}\Big)\wedge\cdots\wedge \bar\partial\Big(\frac{1}{f_k}\Big),\ \varphi\Big\rangle
:=
\lim_{\varepsilon\to 0}\int_{T_\varepsilon}\frac{\varphi}{f_1\cdots f_k},
\qquad
T_\varepsilon:=\{|f_1|=\varepsilon_1,\dots,|f_k|=\varepsilon_k\}.
\end{equation}
The determinant transformation law on overlaps, and hence the gluing, are classical; see \cite{CH78,Passare88} and \cite[\S III]{Demailly}.

\begin{proposition}\label{prop:RZ}\cite{CH78,Passare88,Demailly}.
There exists a canonically defined global current
\begin{equation}\label{eq:RZ}
R_Z\in \mathscr D'_Z\bigl(X,\det(N_{Z/X})^\vee\bigr)
\end{equation}
such that locally, when $\II_Z$ is generated by a regular sequence $f=(f_1,\dots,f_k)$, one has
\begin{equation}\label{eq:RZ-local}
R_Z|_U=\bar\partial\Big(\frac{1}{f_1}\Big)\wedge\cdots\wedge \bar\partial\Big(\frac{1}{f_k}\Big)
\end{equation}
in the induced trivialization of $\det(N_{Z/X})^\vee$.
\end{proposition}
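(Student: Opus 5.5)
The proof amounts to checking that the local Coleff--Herrera products of Proposition~\ref{prop:RZ} glue across overlaps. The statement will follow from two classical facts.

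\emph{Step 1: local currents.} On a Stein set $U$ where $\II_Z|_U$ is generated by a regular sequence $f=(f_1,\dots,f_k)$, the polytube limit \eqref{eq:CH-def} exists and defines a $\bar\partial$-closed current of bidegree $(0,k)$ supported on $Z\cap U$. It is annihilated by $\II_Z$, i.e.\ $f_i\,R^f=0$, and also by $\overline{\II_Z}$, i.e.\ $\bar f_i\,R^f=0$ and $d\bar f_i\wedge R^f=0$. These are the Coleff--Herrera results \cite{CH78}; in the complete intersection case the limit may be taken unrestricted by \cite{Passare88}. Hence $R^f$ is well defined as a current on $U$.

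\emph{Step 2: transformation law on overlaps.} Suppose $g=(g_1,\dots,g_k)$ is another regular sequence generating the same ideal on $U\cap U'$. Write $g_i=\sum_j a_{ij}f_j$ with $a_{ij}$ holomorphic. The transformation law states
\[
R^f=\det(a_{ij})\,R^g .
\]
I would prove this in three moves. First, work locally and reduce to an elementary change of generators, using that $\GL_k(\OO)$ is generated near a point by elementary matrices and diagonal matrices modulo $\II_Z$. Second, for the diagonal case use $\bar\partial(1/(uf_1))=u^{-1}\bar\partial(1/f_1)$, which holds because $u$ is a unit. Third, for an elementary change $f_1\mapsto f_1+hf_2$, combine the annihilation properties from Step 1 with the fact that, modulo $\II_Z$, $R^f$ depends only on the classes $[f_j]$. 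The intrinsic content of the law is that
\[
\bar\partial(1/f_1)\wedge\cdots\wedge\bar\partial(1/f_k)\otimes (f_1\wedge\cdots\wedge f_k)
\]
is independent of $f$. Here $f_1\wedge\cdots\wedge f_k$ is a local frame of $\det(\II_Z/\II_Z^2)=\det(N_{Z/X})^\vee$. This step is the main obstacle: entries of $a$ lying in $\II_Z$ must not contribute, which is exactly where the annihilation $\II_Z\cdot R^f=0$ is used. I would also cite the general transformation law \cite[\S III]{Demailly}, \cite{CH78}.

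\emph{Step 3: gluing.} Because $\det(a_{ij})|_Z$ is the transition function of $\det(N_{Z/X})^\vee$, Step 2 shows that the local currents $R^f$ are the local expressions of a single section $R_Z$ of $\mathscr D'_Z(X,\det(N_{Z/X})^\vee)$. Concretely, on the overlap one has
\[
R^f\otimes e_f = R^g\otimes e_g,
\]
where $e_f$ and $e_g$ denote the dual frames. Choosing a cover by such Stein sets and a partition of unity, define
\[
R_Z:=\sum_\alpha \chi_\alpha\, R^{f_\alpha}\otimes e_{f_\alpha}.
\]
Step 2 makes $R_Z$ independent of the cover, of the generators and of the partition of unity; for a common refinement the local pieces agree. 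By construction $R_Z|_U$ is given by \eqref{eq:RZ-local}. Canonicity follows because no choices remain in the definition. Finally, the support of $R_Z$ lies in $Z$ by Step 1.
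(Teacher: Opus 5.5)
This is essentially the paper's route: the paper gives no argument beyond citing the classical determinant transformation law and the resulting gluing into a $\det(N_{Z/X})^\vee$-valued current, which is exactly your Steps 2--3, with $R^f\otimes(f_1\wedge\cdots\wedge f_k)$ as the invariant object. Two small corrections: the sharp polytube limit in \eqref{eq:CH-def} need not exist unrestrictedly even for complete intersections (Passare--Tsikh), so you should keep Coleff--Herrera's admissible paths; and your ``third move'' restates the law rather than proving it (the real input for $f_1\mapsto f_1+hf_2$ is the iterated description of the product together with $f_2\mu=\bar f_2\mu=0$ for $\mu=\bar\partial(1/f_2)\wedge\cdots\wedge\bar\partial(1/f_k)$), so relying on the citation, as the paper does, is the sound version.
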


\begin{theorem}[Coleff--Herrera--Poincar\'e--Lelong]\cite{CH78,Passare88}  \cite[\S III]{Demailly}\label{thm:CH-PL}
Let $Z\subset X$ be reduced and lci of codimension $k$. Then the scalar current $R_Z\wedge df$ is globally well defined and equals $(2\pi i)^k[Z]$ near $Z$. Equivalently, for every compactly supported top-degree form $\eta$ near $Z$,
\begin{equation}\label{eq:CH-equals-Z}
\langle [Z],\eta\rangle
=
\frac{1}{(2\pi i)^k}\,\big\langle R_Z,\eta\wedge df_1\wedge\cdots\wedge df_k\big\rangle
=
\frac{1}{(2\pi i)^k}\,\lim_{\varepsilon\to 0}\int_{T_\varepsilon}
\eta\wedge \frac{df_1}{f_1}\wedge\cdots\wedge\frac{df_k}{f_k}.
\end{equation}
\end{theorem}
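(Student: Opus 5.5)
The plan is to prove the identity locally, first on $Z^{\reg}$, then across $Z^{\sing}$, and finally to glue the local statements using the transformation law behind Proposition~\ref{prop:RZ}.

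\emph{Step 1: global well-definedness.} Suppose that on an overlap $U\cap U'$ two regular sequences generating $\II_Z$ are related by $f'=Gf$ with $G$ a holomorphic $k\times k$ matrix, invertible near $Z$. The Coleff--Herrera transformation law gives $R'=\det(G)^{-1}R$. Modulo $\II_Z$ we have $df'_1\wedge\cdots\wedge df'_k\equiv \det(G)\,df_1\wedge\cdots\wedge df_k$. The error terms contain either a factor $f_i$ or a factor $d f_i$ multiplied by an element of $\II_Z$. Both are annihilated by $R$: it satisfies $f_iR=0$ and $\bar f_iR=0$, and it has the standard extension property. Hence $R_Z\wedge df$ is a well-defined scalar current, since the sections of $\det(N_{Z/X})^\vee$ and $\det(N_{Z/X})$ cancel. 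It therefore suffices to prove $R\wedge df=(2\pi i)^k[Z]$ on each $U$.

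\emph{Step 2: smooth points.} Let $p\in Z^{\reg}\cap U$. Choose coordinates $(x,y)$ near $p$ with $Z=\{y=0\}$. Then $f=Gy$ with $G$ invertible near $p$, and by Step~1 we may replace $f$ by $y$. For $y$ the polytube integral is an iterated Cauchy integral. Writing $\eta\wedge dy/y$ and integrating over $|y_i|=\varepsilon_i$ gives
$$\int_{T_\varepsilon}\eta\wedge\frac{dy_1}{y_1}\wedge\cdots\wedge\frac{dy_k}{y_k}\;\longrightarrow\;(2\pi i)^k\int_{\{y=0\}}\eta$$
by continuity of $\eta$, with the usual orientation convention. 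The second equality in \eqref{eq:CH-equals-Z} is just the definition \eqref{eq:CH-def} applied to the test form $\eta\wedge df$. Thus the two currents agree on $U\setminus Z^{\sing}$.

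\emph{Step 3: singular points (main obstacle).} The difference $T:=R\wedge df-(2\pi i)^k[Z]$ is a current of bidegree $(k,k)$ supported on $Z^{\sing}$, and $\dim Z^{\sing}<r=n-k$. To conclude $T=0$, I would use the standard extension property. The Coleff--Herrera current $R$ satisfies $\chi(|h|^2/\delta)R\to R$ for any holomorphic $h\not\equiv0$ on $Z$ (Björk, Passare), and this persists after wedging with the smooth form $df$. The integration current $[Z]$ has the same property, because $Z^{\sing}$ has measure zero in $Z$ (Lemma~\ref{lem:fundamental-current}). Taking $h$ vanishing on $Z^{\sing}$ but not identically on any component of $Z$ forces $T=0$.

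As a fallback, both currents are $\bar\partial$-closed of order zero (normal) and of dimension $2r$. The support theorem of Federer/Harvey--Shiffman then implies that a normal current of dimension $2r$ supported on a set of real dimension $<2r$ vanishes. The delicate point here is to verify that $R\wedge df$ has order zero. The standard extension property route avoids this, so I would try it first.
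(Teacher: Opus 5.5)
Your argument is correct, but it cannot follow the paper's route, because the paper gives no proof of this statement. It imports it as a black box from Coleff--Herrera, Passare and Demailly, and uses it only afterwards for the Coleff--Herrera expression of the local contributions.

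You instead reduce it to three standard properties of $R=\bar\partial(1/f_1)\wedge\cdots\wedge\bar\partial(1/f_k)$:
\begin{itemize}
\item the transformation law, $R=\det(G)\,R'$ when $f'=Gf$;
\item the annihilation $f_iR=0$;
\item the standard extension property with respect to $Z$.
\end{itemize}
You add an explicit iterated Cauchy integral at points of $Z^{\reg}$.

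This buys visibility in two places. First, global well-definedness of $R_Z\wedge df$ becomes an actual computation: every error term in $df'_1\wedge\cdots\wedge df'_k-\det(G)\,df_1\wedge\cdots\wedge df_k$ carries a factor $f_j$ and is killed by $R$. Second, one sees exactly where reducedness enters. At a smooth point $f=Gy$ with $G$ \emph{invertible}, so the multiplicity is $1$. For a non-reduced complete intersection the same scheme would produce the fundamental cycle with multiplicities rather than $[Z]$.

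The citation buys brevity. Your proof does not remove the deep input, since the transformation law and the extension property are themselves core results of Coleff--Herrera and Passare. It is a clean reorganization rather than a more elementary proof.

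Some minor precisions. State the extension property for $h$ not vanishing identically on any irreducible component of $Z$, which is how you actually use it. It holds for $[Z]$ because $Z\cap\{h=0\}$ is $\mathcal{H}^{2r}$-null, not merely because $Z^{\sing}$ is. Locally a suitable $h$ vanishing on $Z^{\sing}$ exists, e.g.\ a generic linear combination of the $k\times k$ minors of the Jacobian matrix of $f$. In Step 2 you should also note that both currents vanish on $U\setminus Z$.

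Your fallback is weaker than presented. Federer's support theorem concerns normal (or flat) currents. Besides the order of $R\wedge df$, you would also need $d(R\wedge df)$ to have order zero, and $\bar\partial$-closedness alone does not give this. If you want a second route, the dimension principle for pseudomeromorphic currents applies directly to $T=R\wedge df-(2\pi i)^k[Z]$. It says that a pseudomeromorphic current of bidegree $(*,k)$ supported on a set of codimension $>k$ vanishes.
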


\section{Tubes around compact lci components}\label{sec:tubes}

For the global localization proof we need tubular neighborhoods shrinking to a compact lci component. The following statement is sufficient for the argument and avoids any unnecessary quantitative estimate.

\begin{proposition}\label{prop:tubes}
Let $X$ be a complex manifold of complex dimension $n$, and let $Z\subset X$ be a compact reduced lci analytic subspace of pure dimension $r$ and codimension $k=n-r\ge 1$. Then there exist:
\begin{enumerate} 
\item an open neighborhood $W$ of $Z$ and a smooth function $\varphi:W\to [0,\infty)$ such that $\varphi^{-1}(0)=Z$;
\item a number $\varepsilon_0>0$ such that for every regular value $\varepsilon\in (0,\varepsilon_0)$ the set
\begin{equation}\label{eq:Ueps-def}
U_\varepsilon(Z):=\{x\in W\mid \varphi(x)<\varepsilon\}
\end{equation}
is a neighborhood of $Z$ with smooth boundary $\partial U_\varepsilon(Z)=\varphi^{-1}(\varepsilon)$;
\item on $Z$, after choosing a Hermitian metric on $N_{Z/X}|_{Z}$, the restriction of $\partial U_\varepsilon(Z)$ over $Z$ is canonically diffeomorphic to the radius-$\sqrt{\varepsilon}$ sphere bundle of $N_{Z/X}|_{Z}$;
\item for every smooth $2n$-form $\beta$ on $X$ one has
\begin{equation}\label{eq:beta-tube-goes-to-0-prop}
\lim_{\varepsilon\to 0}\int_{U_\varepsilon(Z)}\beta=0.
\end{equation}
Consequently, if
\begin{equation}\label{eq:Meps-local}
M_\varepsilon:=X\setminus \mathrm{int}(U_\varepsilon(Z)),
\end{equation}
then $$\lim_{\varepsilon\to 0}\int_{M_\varepsilon}\beta=\int_X\beta$$
\end{enumerate}
\end{proposition}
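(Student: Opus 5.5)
We will build $\varphi$ from local defining equations glued by a partition of unity. Cover $Z$ by finitely many Stein open sets $U_\alpha\subset X$ (possible by compactness) on which $\II_Z$ is generated by a regular sequence $f_\alpha=(f_{\alpha,1},\dots,f_{\alpha,k})$. Let $(\chi_\alpha)$ be a partition of unity on $W:=\bigcup_\alpha U_\alpha$ subordinate to this cover. Set
\[
\varphi:=\sum_\alpha \chi_\alpha\,|f_\alpha|^2,\qquad |f_\alpha|^2=\sum_{b}|f_{\alpha,b}|^2 .
\]
This function is smooth and nonnegative. Since $Z$ is reduced, on overlaps the generators are related by $f_\beta=g_{\beta\alpha}f_\alpha$ with $g_{\beta\alpha}$ holomorphic and invertible near $Z$. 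Hence the functions $|f_\alpha|^2$ are mutually comparable on overlaps, after shrinking $W$, and $\varphi^{-1}(0)=Z$. This gives (1).

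For (2) we use Sard's theorem: almost every $\varepsilon$ is a regular value, and for those $\varphi^{-1}(\varepsilon)$ is a smooth hypersurface bounding $U_\varepsilon(Z)$. Compactness of $Z$ and properness of $\varphi$ near $Z$ give $\varepsilon_0$ such that $\{\varphi\le\varepsilon\}$ is a compact subset of $W$ for $\varepsilon<\varepsilon_0$. So $U_\varepsilon(Z)$ is a neighborhood of $Z$ with boundary $\varphi^{-1}(\varepsilon)$.

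For (3), which we read as a statement over the smooth locus (this is how it is used later), we argue at $p\in Z^{\reg}$ in coordinates $(x,y)$ as in Proposition~\ref{prop:jacobian}. There $f_\alpha=G_\alpha(x,y)\,y$ with $G_\alpha$ invertible. So $\varphi=y^{*}H(x,y)y$ with $H$ a positive Hermitian matrix depending smoothly on the point. Its restriction $H(x,0)$ defines a Hermitian metric on $N_{Z/X}$, which is the metric in (3); this is the ``chosen'' metric, and other choices differ by a canonical fiberwise rescaling isotopy. A Morse--Bott type argument, rescaling $y\mapsto \sqrt{\varepsilon}\,y$ and letting $\varepsilon\to0$, identifies $\varphi^{-1}(\varepsilon)$ locally with the radius-$\sqrt\varepsilon$ sphere bundle $\{y^*H(x,0)y=\varepsilon\}$. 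The identification is via the tubular map of a chosen smooth splitting and normal exponential map. These local identifications are canonical up to isotopy, so they glue.

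(4) is the simplest part. The sets $U_\varepsilon(Z)$ decrease as $\varepsilon\downarrow0$, and their intersection is $Z$. Since $Z$ is a proper analytic subset of real codimension $2k\ge2$, it has Lebesgue measure zero. Dominated convergence, using $|\beta|\le C\,dV$ on the compact set $\overline{U_{\varepsilon_0}}$, then gives $\int_{U_\varepsilon}\beta\to0$. Writing $\int_{M_\varepsilon}\beta=\int_X\beta-\int_{U_\varepsilon}\beta$, or the same with the portion inside $W$ when $X$ is noncompact and $\beta$ is integrable, gives the consequence.

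The main obstacle is (3) near $Z^{\sing}$. There $N_{Z/X}$ exists as a bundle, but $Z$ has no smooth tubular neighborhood, so a literal diffeomorphism over all of $Z$ cannot come from the normal exponential map. We plan to state and prove (3) over $Z^{\reg}$, and over compact subsets of it, uniformly. If the global version is needed, we would try the deformation to the normal cone. The lci hypothesis gives the flat family $\mathcal X\to\CC$ with special fiber $N_{Z/X}$, and $\varphi$ extends to $|\cdot|^2_H$ there. The identification would then be transported along the family away from critical points, but we expect the proof of (3) to need this extra care.
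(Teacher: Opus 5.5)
Your proposal is correct and is essentially the paper's proof: the same glued function $\varphi=\sum_\alpha\chi_\alpha|f_\alpha|^2$, Sard's theorem for (2), the tubular neighborhood of $Z^{\reg}$ for (3), and the fact that $Z$ has measure zero plus monotone/dominated convergence for (4). Your normal-Hessian (Morse--Bott) argument spells out what the paper's proof only asserts in one line, and restricting (3) to compact subsets of $Z^{\reg}$ is all that proof actually establishes and all that Section~\ref{sec:proof-final} uses; the literal global version cannot hold (for a nodal curve in a surface, the circle bundle of $N_{Z/X}$ is not a manifold over the node, whereas $\partial U_\varepsilon(Z)$ is), so drop the deformation-to-the-normal-cone fallback rather than pursue it.
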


\begin{proof}
Because $Z$ is lci of codimension $k$, every point of $Z$ has a neighborhood $U_\alpha$ on which $\II_Z$ is generated by a regular sequence $f_\alpha=(f_{\alpha,1},\dots,f_{\alpha,k})$. After passing to a finite cover of the compact set $Z$, choose a partition of unity $\{\rho_\alpha\}$ subordinate to these neighborhoods and set
\begin{equation}\label{eq:phi-global}
\varphi:=\sum_{\alpha}\rho_\alpha\sum_{j=1}^k |f_{\alpha,j}|^2.
\end{equation}
Then $\varphi\ge 0$ and $\varphi^{-1}(0)=Z$.
By Sard's theorem, the set of critical values of the smooth function $\varphi$ has measure zero, so every sufficiently small regular value $\varepsilon$ gives a smooth boundary $\partial U_\varepsilon(Z)$. Since the sublevel sets shrink to $Z$, they form a neighborhood basis of $Z$.
On $Z^\reg$ the inclusion $Z^\reg\hookrightarrow X$ is a smooth embedded submanifold of real codimension $2k$. The smooth tubular neighborhood theorem identifies a neighborhood of the zero section in the real normal bundle with a neighborhood of $Z$ in $X$. Because $Z$ is lci, the real normal bundle underlying $N_{Z/X}|_{Z}$ is precisely this smooth normal bundle, and the level sets of $\varphi$ are small normal spheres after shrinking. This proves the third assertion. 
Finally, let $dV$ be any smooth positive volume form on $X$. Analytic subsets of positive codimension have zero $2n$-dimensional measure in $X$; see \cite{King71,Demailly}. Since $U_\varepsilon(Z)$ decreases to $Z$ as $\varepsilon\to 0$, continuity from above for the finite measure determined by $dV$ gives
\begin{equation}\label{eq:volume-to-zero}
\lim_{\varepsilon\to 0}\int_{U_\varepsilon(Z)} dV=\int_Z dV=0.
\end{equation}
If $\beta$ is any smooth $2n$-form, there exists $C>0$ such that $|\beta|\le C dV$ on the compact manifold $X$, hence
\begin{equation}\label{eq:beta-bound-by-volume}
\Big|\int_{U_\varepsilon(Z)}\beta\Big|\le C\int_{U_\varepsilon(Z)}dV\xrightarrow[\varepsilon\to 0]{}0.
\end{equation}
The last statement follows from
\begin{equation}\label{eq:split-X}
\int_{M_\varepsilon}\beta=\int_X\beta-\int_{U_\varepsilon(Z)}\beta.
\end{equation}
\end{proof}

\section{Proof of Theorem \ref{thm:main}}\label{sec:proof-final}

Fix a $\GL_n(\CC)$-invariant polynomial $\Phi$ on $\mathfrak{gl}_n(\CC)$, homogeneous of degree $n$. Choose a Hermitian metric on $T_X(-\log D)$ and let $\nabla^h$ be its Chern connection, with curvature $\Omega^h$. By the standard Chern--Weil theorem,
\begin{equation}\label{eq:CW-proof}
\int_X \Phi\bigl(T_X(-\log D)\bigr)=\int_X \Phi(\Omega^h).
\end{equation}
We shall therefore work with the differential form $\Phi(\Omega^h)$ and recover at the end the intrinsic notation of \eqref{eq:main}.

Let $Z(v)=\bigsqcup_{j=1}^m Z_j$ be the standing decomposition into compact reduced lci components. For each $j$, choose a tube $U_{j,\varepsilon}:=U_\varepsilon(Z_j)$ as in Proposition \ref{prop:tubes}. For $\varepsilon>0$ sufficiently small these tubes are pairwise disjoint. Set
\begin{equation}\label{eq:Meps}
M_\varepsilon:=X\setminus \mathrm{int}\Big(\bigsqcup_{j=1}^m U_{j,\varepsilon}\Big),
\qquad
\partial M_\varepsilon=\bigsqcup_{j=1}^m \partial U_{j,\varepsilon}.
\end{equation}
By Proposition \ref{prop:tubes},
\begin{equation}\label{eq:limit}
\int_X \Phi\bigl(T_X(-\log D)\bigr)=\lim_{\varepsilon\to 0}\int_{M_\varepsilon}\Phi(\Omega^h).
\end{equation}

Set $U:=X\setminus Z(v)$. On $U$ the logarithmic holomorphic vector field
$$
v\in H^0\bigl(X,T_X(-\log D)\bigr)
$$
is nowhere zero. By the holomorphic flow-box theorem and Lemma \ref{lem:product-divisor}, $U$ admits a locally finite cover by holomorphic flow boxes
\begin{equation}\label{eq:flow-box-cover}
W_\alpha\simeq V_\alpha\times \Delta_\alpha
\end{equation}
with coordinates $(w,t)$ in which
\begin{equation}\label{eq:flow-box-v}
v=\frac{\partial}{\partial t}
\qquad \text{and} \qquad
D\cap W_\alpha=(D_\alpha\cap V_\alpha)\times \Delta_\alpha.
\end{equation}
Consequently all local constructions may be carried out on $T_X(-\log D)|_{W_\alpha}$ as on the pullback of a logarithmic bundle from the base $V_\alpha$.

On each $W_\alpha$ choose a smooth connection $\nabla^\alpha$ on $T_X(-\log D)|_{W_\alpha}$ which is \emph{basic} in Bott's sense, namely whose connection matrix has no $dt$ and no $d\overline t$ component in a product trivialization. Concretely, choose a smooth frame on the slice $V_\alpha\times\{0\}$ and extend it constantly along $t$ and $\overline t$, declaring the $t$ and $\overline t$ covariant derivatives to vanish. Next choose a smooth partition of unity $\{\rho_\alpha\}$ subordinate to $\{W_\alpha\}$ such that each $\rho_\alpha$ depends only on the $w$ variables. Define the global Bott connection on $T_X(-\log D)|_U$ by
\begin{equation}\label{eq:global-bott}
\nabla^{\mathrm B}:=\sum_\alpha \rho_\alpha\,\nabla^\alpha.
\end{equation}
Because the differentials $d\rho_\alpha$ have no $dt,d\overline t$ components, the connection $\nabla^{\mathrm B}$ is again basic on each flow box. Hence its curvature $\Omega^{\mathrm B}$ is pulled back from the $(2n-2)$-real-dimensional base $V_\alpha$, and therefore
\begin{equation}\label{eq:basic-vanishing}
\Phi(\Omega^{\mathrm B})\equiv 0
\qquad \text{on } U
\end{equation}
by degree reasons. This is the usual Bott vanishing argument \cite{Bott67a,BottTu,SuwaBook}.
Consider the   transgression form between $\nabla^{\mathrm B}$ and $\nabla^h$. For
\begin{equation}\label{eq:nabla-t}
\nabla^t:=\nabla^{\mathrm B}+t(\nabla^h-\nabla^{\mathrm B})
\end{equation}
with curvature $\Omega^t$, define
\begin{equation}\label{eq:CS-def}
\Theta_\Phi(\nabla^{\mathrm B},\nabla^h)
:=
n\int_0^1 \Phi\big(\nabla^h-\nabla^{\mathrm B},\Omega^t,\dots,\Omega^t\big)\,dt
\in \Omega^{2n-1}(U).
\end{equation}
Then
\begin{equation}\label{eq:CS}
d\,\Theta_\Phi(\nabla^{\mathrm B},\nabla^h)=\Phi(\Omega^h)-\Phi(\Omega^{\mathrm B})=\Phi(\Omega^h)
\qquad \text{on } U
\end{equation}
by \cite{ChernSimons74,BottTu}. Integrating over $M_\varepsilon$ and using Stokes yields
\begin{equation}\label{eq:boundary-sum}
\int_{M_\varepsilon}\Phi(\Omega^h)
=
\sum_{j=1}^m \int_{\partial U_{j,\varepsilon}}\Theta_\Phi(\nabla^{\mathrm B},\nabla^h).
\end{equation}
Combining this with \eqref{eq:limit}, we reduce the theorem to the local boundary limits
\begin{equation}\label{eq:boundary-limit}
\int_X \Phi\bigl(T_X(-\log D)\bigr)
=
\sum_{j=1}^m \lim_{\varepsilon\to 0}\int_{\partial U_{j,\varepsilon}}\Theta_\Phi(\nabla^{\mathrm B},\nabla^h).
\end{equation}

We now fix one component $Z:=Z_j$ of dimension $r$ and codimension $k=n-r$, and prove that its boundary limit equals $\langle [Z],\operatorname{Res}_Z(\Phi)\rangle$.

Choose a relatively compact open subset $V\Subset Z$. Over $V$ the tube boundary is canonically a smooth sphere bundle
\begin{equation}\label{eq:sphere-bundle-local}
\pi_\varepsilon:S_\varepsilon(N_{Z/X}|_V)\longrightarrow V.
\end{equation}
Define the fiber integral
\begin{equation}\label{eq:alpha-eps}
\alpha_{\varepsilon,V}:=(\pi_\varepsilon)_*\Big(\Theta_\Phi(\nabla^{\mathrm B},\nabla^h)|_{S_\varepsilon(N_{Z/X}|_V)}\Big)
\in \Omega^{2r}(V).
\end{equation}
Then
\begin{equation}\label{eq:fiber-int-local}
\int_{\partial U_{\varepsilon}(Z)\cap \pi_\varepsilon^{-1}(V)}\Theta_\Phi(\nabla^{\mathrm B},\nabla^h)=\int_V \alpha_{\varepsilon,V}.
\end{equation}
Since $V$ is arbitrary and $Z^\sing$ has codimension at least one in $Z$, it is enough to identify the smooth limit of $\alpha_{\varepsilon,V}$ on compact subsets of $Z$ and then pass to the current $[Z]$ by exhaustion, using Lemma \ref{lem:fundamental-current}.

Over $V$ choose a Hermitian orthogonal splitting of the exact sequence
\begin{equation}\label{eq:split-exact}
0\longrightarrow K_Z\longrightarrow T_X(-\log D)|_V\xrightarrow{\ \rho\ }N_{Z/X}|_V\longrightarrow 0,
\end{equation}
which exists because we are in the smooth category on $V$. Using the exponential map of the chosen Hermitian metric on $N_{Z/X}|_V$, identify a neighborhood of $V$ in $X$ with a neighborhood of the zero section in $N_{Z/X}|_V$. In local coordinates $(x,y)$ adapted to $V=\{y=0\}$, Proposition \ref{prop:jacobian} gives
\begin{equation}\label{eq:linearization-normal}
v(x,y)=v_{\parallel}(x,y)+A_Z(x)y+R(x,y),
\end{equation}
where $v_{\parallel}(x,y)$ takes values in $K_Z$, the term $A_Z(x)y$ is the first-order normal part, and
\begin{equation}\label{eq:R-quadratic}
R(x,y)=O(|y|^2)
\end{equation}
in the normal variables. Bott nondegeneracy means that $A_Z(x)$ is invertible for every $x\in V$.

For $\varepsilon>0$ sufficiently small, the vector fields
\begin{equation}\label{eq:homotopy-v}
v_s(x,y):=v_{\parallel}(x,y)+A_Z(x)y+sR(x,y),
\qquad 0\le s\le 1,
\end{equation}
are nowhere zero on the sphere bundle $S_\varepsilon(N_{Z/X}|_V)$.
Indeed, the linear term $A_Z(x)y$ is uniformly bounded away from zero on the compact sphere bundle because $A_Z$ is invertible and $V$ is relatively compact, while $R(x,y)=O(|y|^2)$ becomes arbitrarily small after restricting to a sufficiently small sphere. Therefore the boundary integrals are homotopy invariant, and one may replace $v$ by its linear normal model
\begin{equation}\label{eq:linear-model}
v_{\mathrm{lin}}(x,y):=v_{\parallel}(x,0)+A_Z(x)y.
\end{equation}
This is the standard reduction step in Bott's proof; compare \cite{Bott67b,BottTu,SuwaBook}.
Keep the splitting \eqref{eq:split-exact}. On the linear model, the connection $\nabla^{\mathrm B}$ may be chosen so that, along $V$, it is block diagonal with respect to
$$
T_X(-\log D)|_V\simeq K_Z\oplus N_{Z/X}|_V,
$$
namely
\begin{equation}\label{eq:block-diagonal-connection}
\nabla^{\mathrm B}|_V=\nabla_K\oplus \nabla_N,
\end{equation}
where $\nabla_K$ and $\nabla_N$ are the Chern connections of the chosen Hermitian metrics on $K_Z$ and $N_{Z/X}|_V$.
Consequently the associated curvature matrix has the form
\begin{equation}\label{eq:block-curvature}
\Omega^{\mathrm B}|_V=
\begin{pmatrix}
\Omega_K & 0\\
0 & \Omega_N
\end{pmatrix}.
\end{equation}

After the fiberwise rescaling $y=\varepsilon u$, the sphere bundle $S_\varepsilon(N_{Z/X}|_V)$ is identified with the fixed unit sphere bundle $S_1(N_{Z/X}|_V)$. Under this identification the normal linear term becomes $A_Z(x)u$, whereas the higher-order term contributes only $O(\varepsilon)$ after rescaling, by \eqref{eq:R-quadratic}. The pullback of the transgression form therefore admits an expansion whose coefficients depend smoothly on $x\in V$ and on the angular variable $u\in S_1(N_{Z/X}|_x)$, and whose leading term is exactly the universal Bott form associated with the pair $(A_Z(x),\Omega_K(x),\Omega_N(x))$.

More concretely, only the component of total fiber degree $2k-1$ survives after integration over the sphere $S_1(N_{Z/X}|_x)\simeq S^{2k-1}$, and Bott's classical sphere integral identity applied fiberwise yields
\begin{equation}\label{eq:bott-fiber-identity}
\int_{S_1(N_{Z/X}|_x)} \lim_{\varepsilon\to 0}\Theta_\Phi(\nabla^{\mathrm B},\nabla^h)
=
\Big(\Phi(A_Z+\Omega_K)\wedge \det(A_Z+\Omega_N)^{-1}\Big)_{[2r]}(x).
\end{equation}
Equivalently,
\begin{equation}\label{eq:alpha-limit-pointwise}
\lim_{\varepsilon\to 0}\alpha_{\varepsilon,V}(x)
=
\Big(\Phi(A_Z+\Omega_K)\wedge \det(A_Z+\Omega_N)^{-1}\Big)_{[2r]}(x).
\end{equation}
Because the dependence on $x$ is smooth and the convergence is uniform on compact subsets of $V$, the convergence is smooth on $V$. By Definition \ref{def:resform-intro}, the right-hand side is exactly $\operatorname{Res}_Z(\Phi)|_V$.

Therefore, for every relatively compact $V\Subset Z$,
\begin{equation}\label{eq:local-convergence-on-V}
\lim_{\varepsilon\to 0}\int_V \alpha_{\varepsilon,V}
=
\int_V \operatorname{Res}_Z(\Phi).
\end{equation}

Choose an exhaustion $V_1\Subset V_2\Subset \cdots \Subset Z$ by relatively compact open subsets whose union is $Z$. Since $Z^\sing$ has real codimension at least $2$ in the pure $2r$-dimensional space $Z$, the form $\operatorname{Res}_Z(\Phi)$ is locally integrable near $Z^\sing$ by Lemma \ref{lem:fundamental-current}. Hence
\begin{equation}\label{eq:pairing-reg-res}
\langle [Z],\operatorname{Res}_Z(\Phi)\rangle=\int_{Z}\operatorname{Res}_Z(\Phi)=\lim_{\nu\to \infty}\int_{V_\nu}\operatorname{Res}_Z(\Phi).
\end{equation}
On each $V_\nu$ we already know, from \eqref{eq:local-convergence-on-V}, that the boundary integral converges to the residue integral. Passing first to the limit $\varepsilon\to 0$ for fixed $\nu$, and then to the exhaustion limit, gives
\begin{equation}\label{eq:component-limit}
\lim_{\varepsilon\to 0}\int_{\partial U_\varepsilon(Z)}\Theta_\Phi(\nabla^{\mathrm B},\nabla^h)
=
\langle [Z],\operatorname{Res}_Z(\Phi)\rangle.
\end{equation}
Repeating this for each component $Z_j$ and substituting into \eqref{eq:boundary-limit} yields exactly
\begin{equation}\label{eq:finish}
\int_X \Phi\bigl(T_X(-\log D)\bigr)
=
\sum_{j=1}^m \langle [Z_j],\operatorname{Res}_{Z_j}(\Phi)\rangle
=
\sum_{j=1}^m \int_{Z_j^\reg}\operatorname{Res}_{Z_j}(\Phi).
\end{equation}
This proves the equality \eqref{eq:main}.
If the Hermitian metrics and Chern connections on $K_j$ and $N_{Z_j/X}|_{Z_j }$ are replaced by different choices, the corresponding residue forms differ by an exact $2r_j$-form on $Z_j^\reg$, by a transgression on the base $Z_j$; see \cite{ChernSimons74,BottTu,Kobayashi87}. Since $[Z_j]$ is closed, pairing with $[Z_j]$ kills exact forms. Thus the sum of residues is intrinsic, as asserted in Theorem \ref{thm:main}.

\subsection*{Local Coleff--Herrera expression of the local contribution}

We now rewrite the local contribution of a reduced lci component in explicit local form. Since $Z_j$ is only assumed to be a local complete intersection, one cannot in general choose global defining equations. Accordingly, the Coleff--Herrera realization must be written on a local cover and glued by the transformation law.

Assume the hypotheses of Theorem \ref{thm:main}, and let $Z_j\subset X$ be a reduced lci component of the zero scheme of $v$, of codimension $k_j$ and dimension $r_j$. Choose an open cover $(U_{j,\alpha})_\alpha$ of a neighborhood of $Z_j$ such that on each $U_{j,\alpha}$ the ideal sheaf $\mathcal I_{Z_j}$ is generated by a regular sequence
$$
f_{j,\alpha}=(f_{j,\alpha,1},\dots,f_{j,\alpha,k_j}).
$$
On overlaps $U_{j,\alpha}\cap U_{j,\beta}$ one has
$$
f_{j,\alpha}=g_{\alpha\beta}f_{j,\beta},
$$
where $g_{\alpha\beta}$ is an invertible holomorphic matrix. The associated local Coleff--Herrera currents
\begin{equation}\label{eq:CH-local-current}
R_{j,\alpha}
:=
\bar\partial(1/f_{j,\alpha,1})\wedge\cdots\wedge\bar\partial(1/f_{j,\alpha,k_j})
\end{equation}
satisfy the transformation law
$$
R_{j,\alpha}=(\det g_{\alpha\beta})^{-1}R_{j,\beta},
$$
and the local Jacobian forms satisfy
$$
df_{j,\alpha,1}\wedge\cdots\wedge df_{j,\alpha,k_j}
=
\det(g_{\alpha\beta})\,
df_{j,\beta,1}\wedge\cdots\wedge df_{j,\beta,k_j}
\quad \text{mod } \mathcal I_{Z_j}.
$$
Hence the collection $(R_{j,\alpha})_\alpha$ defines canonically the global Coleff--Herrera current associated with $Z_j$, and one has locally
\begin{equation}\label{eq:CH-PL-local}
(2\pi i)^{k_j}[Z_j]
=
R_{j,\alpha}\wedge df_{j,\alpha,1}\wedge\cdots\wedge df_{j,\alpha,k_j}.
\end{equation}

Let
$$
\operatorname{Res}_{Z_j}(\Phi)
=
\Bigl(\Phi(A_{Z_j}+\Omega_{K,j})\wedge \det(A_{Z_j}+\Omega_{N,j})^{-1}\Bigr)_{[2r_j]}
\in \Omega^{2r_j}(Z_j^{\reg})
$$
be the residue form of Definition \ref{def:resform-intro}. On each $U_{j,\alpha}$ choose a smooth closed form
$$
\eta_{j,\alpha}\in \mathscr A^{2r_j}(U_{j,\alpha})
$$
such that
$$
\eta_{j,\alpha}|_{Z_j^{\reg}\cap U_{j,\alpha}}
=
\operatorname{Res}_{Z_j}(\Phi).
$$
Finally, let $(\chi_{j,\alpha})_\alpha$ be a smooth partition of unity subordinate to $(U_{j,\alpha})_\alpha$ on a neighborhood of $Z_j$.

\begin{proposition}\label{prop:CH-local-form}
With the notation above, one has
\begin{equation}\label{eq:CH-local-form}
\langle [Z_j],\operatorname{Res}_{Z_j}(\Phi)\rangle
=
\frac{1}{(2\pi i)^{k_j}}
\sum_\alpha
\left\langle
R_{j,\alpha},
\chi_{j,\alpha}\,\eta_{j,\alpha}\wedge
df_{j,\alpha,1}\wedge\cdots\wedge df_{j,\alpha,k_j}
\right\rangle.
\end{equation}
This expression is independent of the chosen cover, of the chosen generators $f_{j,\alpha}$, of the local representatives $\eta_{j,\alpha}$, and of the partition of unity.
\end{proposition}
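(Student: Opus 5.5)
The plan is to reduce everything to the Coleff--Herrera--Poincar\'e--Lelong identity of Theorem \ref{thm:CH-PL}, applied chart by chart. First split the pairing with the partition of unity. Since $\sum_\alpha\chi_{j,\alpha}=1$ on a neighborhood of $Z_j$ and each $\chi_{j,\alpha}$ has support in $U_{j,\alpha}$, linearity of the fundamental current (Lemma \ref{lem:fundamental-current}) gives
$$
\langle [Z_j],\operatorname{Res}_{Z_j}(\Phi)\rangle=\sum_\alpha \int_{Z_j^{\reg}}\chi_{j,\alpha}\operatorname{Res}_{Z_j}(\Phi)=\sum_\alpha\langle [Z_j],\chi_{j,\alpha}\eta_{j,\alpha}\rangle .
$$
The last equality uses that $\eta_{j,\alpha}$ restricts to $\operatorname{Res}_{Z_j}(\Phi)$ on $Z_j^{\reg}\cap U_{j,\alpha}$. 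It also uses that $Z_j^{\sing}$ has measure zero for $[Z_j]$, so $[Z_j]$ only sees the restriction to the regular locus. Each $\chi_{j,\alpha}\eta_{j,\alpha}$ is then a compactly supported smooth form on $U_{j,\alpha}$. Only its component of bidegree $(r_j,r_j)$ pairs nontrivially with $[Z_j]$ and, after wedging with $\omega_{j,\alpha}$, with $R_{j,\alpha}$. So Theorem \ref{thm:CH-PL}, in its local form \eqref{eq:CH-PL-local}, applies on $U_{j,\alpha}$ with generators $f_{j,\alpha}$ and gives
$$
\langle [Z_j],\chi_{j,\alpha}\eta_{j,\alpha}\rangle=\frac{1}{(2\pi i)^{k_j}}\langle R_{j,\alpha},\chi_{j,\alpha}\eta_{j,\alpha}\wedge\omega_{j,\alpha}\rangle .
$$
One must watch the sign convention for moving $\omega_{j,\alpha}$ past $\eta_{j,\alpha}$. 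Since $\eta_{j,\alpha}$ has even degree, no sign appears. Summing over $\alpha$ yields \eqref{eq:CH-local-form}.

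For the independence claims, I will use the fact that the left-hand side of \eqref{eq:CH-local-form} is intrinsic. It depends only on $\operatorname{Res}_{Z_j}(\Phi)$ on $Z_j^{\reg}$, once the metrics are fixed. Since the identity holds for every admissible choice of cover, generators, representatives and partition of unity, the right-hand side is automatically independent of all of them. As a consistency check, I will also verify these directly.

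\emph{Representatives.} Suppose $\eta'-\eta$ vanishes on $Z_j^{\reg}\cap U_{j,\alpha}$. Then $\eta'-\eta$ vanishes on $Z_j\cap U_{j,\alpha}$ by continuity, since $Z_j^{\reg}$ is dense. I then need the standard fact that $R_{j,\alpha}\wedge\omega_{j,\alpha}$ annihilates forms whose pullback to $Z_j^{\reg}$ is zero. This follows again from Theorem \ref{thm:CH-PL}.

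\emph{Generators.} If $f_{j,\alpha}=g f'_{j,\alpha}$ with $g$ holomorphic and invertible, the two transformation rules
$$
R_{j,\alpha}=(\det g)^{-1}R'_{j,\alpha},\qquad \omega_{j,\alpha}=\det(g)\,\omega'_{j,\alpha}\pmod{\II_{Z_j}}
$$
make the product $R_{j,\alpha}\wedge\omega_{j,\alpha}$ unchanged. This uses that $\bar\phi R=0$ for $\phi\in\II_{Z_j}$, and also $d\phi\wedge R=0$ in the relevant degree, by the annihilator property of Coleff--Herrera currents. So the error terms $h\,df$ with $h\in\II_{Z_j}$ are killed.

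\emph{Cover and partition of unity.} Given a second cover $(U'_\beta)$ with partition $(\chi'_\beta)$, pass to the common refinement $U_{j,\alpha}\cap U'_\beta$ and insert $\sum_\beta\chi'_\beta=1$. Then apply the generator independence on the overlaps.

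I expect the main obstacle to be the precise justification of the annihilation statements. These are: $R\wedge df$ kills forms vanishing on $Z^{\reg}$, and it kills terms in $\II_Z\cdot\Omega$ coming from the mod-$\II_{Z_j}$ Jacobian identity. The first route I will try is to reduce both to Theorem \ref{thm:CH-PL}, since $R\wedge df=(2\pi i)^k[Z]$ and $[Z]$ only sees pullbacks to $Z^{\reg}$. Going through $[Z_j]$ in this way avoids any delicate regularity analysis of $R_{j,\alpha}$ near $Z_j^{\sing}$.
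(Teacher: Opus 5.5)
Your proposal is correct and follows the paper's own argument. You apply the local identity $(2\pi i)^{k_j}[Z_j]=R_{j,\alpha}\wedge\omega_{j,\alpha}$ on each $U_{j,\alpha}$, multiply by $\chi_{j,\alpha}$, sum, and use $\sum_\alpha\chi_{j,\alpha}=1$ near $Z_j$ together with $\eta_{j,\alpha}|_{Z_j^{\reg}\cap U_{j,\alpha}}=\operatorname{Res}_{Z_j}(\Phi)$; you then deduce the independence claims from the intrinsic left-hand side, which you state more explicitly than the paper does. One minor slip in your optional direct check of generator independence: the error terms $h\cdot(\text{holomorphic }k\text{-form})$ with $h\in\II_{Z_j}$ are killed by the holomorphic annihilation $h\,R_{j,\alpha}=0$, not by $\bar h\,R_{j,\alpha}=0$, and $d\phi\wedge R$ does not vanish in general for $\phi\in\II_{Z_j}$ (e.g.\ $dz\wedge\bar\partial(1/z)=2\pi i\,\delta_0$), though this does not affect your main argument.
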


\begin{proof}
By \eqref{eq:CH-PL-local}, on each $U_{j,\alpha}$ one has
$$
(2\pi i)^{k_j}[Z_j]\wedge \eta_{j,\alpha}
=
R_{j,\alpha}\wedge \eta_{j,\alpha}\wedge
df_{j,\alpha,1}\wedge\cdots\wedge df_{j,\alpha,k_j}.
$$
Multiplying by $\chi_{j,\alpha}$ and summing over $\alpha$, we obtain
$$
(2\pi i)^{k_j}\sum_\alpha \chi_{j,\alpha}[Z_j]\wedge \eta_{j,\alpha}
=
\sum_\alpha
R_{j,\alpha}\wedge \chi_{j,\alpha}\eta_{j,\alpha}\wedge
df_{j,\alpha,1}\wedge\cdots\wedge df_{j,\alpha,k_j}.
$$
Since $\sum_\alpha \chi_{j,\alpha}=1$ near $Z_j$ and each $\eta_{j,\alpha}$ restricts to $\operatorname{Res}_{Z_j}(\Phi)$ on $Z_j^{\reg}\cap U_{j,\alpha}$, the left-hand side is exactly
$$
(2\pi i)^{k_j}\,\langle [Z_j],\operatorname{Res}_{Z_j}(\Phi)\rangle.
$$
This proves \eqref{eq:CH-local-form}. The independence of the chosen generators follows from the transformation law for Coleff--Herrera currents and the corresponding transformation law for the Jacobian factor. The independence of the local representatives $\eta_{j,\alpha}$ is immediate, since only their restriction to $Z_j^{\reg}$ contributes to the pairing with $[Z_j]$.
\end{proof}

Summing \eqref{eq:CH-local-form} over all components and using Theorem \ref{thm:main}, one obtains the global current-theoretic expression
$$
\int_X \Phi\bigl(T_X(-\log D)\bigr)
=
\sum_{j=1}^m
\frac{1}{(2\pi i)^{k_j}}
\sum_\alpha
\left\langle
R_{j,\alpha},
\chi_{j,\alpha}\,\eta_{j,\alpha}\wedge
df_{j,\alpha,1}\wedge\cdots\wedge df_{j,\alpha,k_j}
\right\rangle.
$$

If $Z_j$ is smooth, one may choose a tubular retraction $\tau_j:U_j\to Z_j$ and take local representatives of the form
$$
\eta_{j,\alpha}=\tau_j^*\bigl(\operatorname{Res}_{Z_j}(\Phi)\bigr)|_{U_{j,\alpha}},
$$
so that the above formula reduces to the usual integral of the residue form over $Z_j$.
\begin{example}\rm \label{exe}
Let $Y:=\mathbb P(1,1,1,k)$, with $k\ge 2$, and let
$
\pi:(X,D)\to Y
$
be the natural resolution of the isolated quotient singularity at $[0:0:0:1]$. Thus
$$
X=\mathbb P_{\mathbb P^2}\bigl(\mathcal O_{\mathbb P^2}\oplus \mathcal O_{\mathbb P^2}(k)\bigr),
\qquad
D\simeq \mathbb P^2,
\qquad
N_{D/X}\simeq \mathcal O_{\mathbb P^2}(-k).
$$
Choose constants $a,b,c\in \mathbb C^*$ such that
\begin{equation}\label{eq:weights-example-3fold}
a\neq b,
\qquad
c\neq ka,
\qquad
c\neq kb,
\end{equation}
and consider the diagonal vector field
$$
\xi
=
a z_0\partial_{z_0}
+
a z_1\partial_{z_1}
+
b z_2\partial_{z_2}
+
c z_3\partial_{z_3}.
$$
It induces a one-dimensional foliation on $Y$, and its logarithmic pullback defines a global section
$$
\widetilde\xi\in H^0\bigl(X,T_X(-\log D)\bigr).
$$
We show that the zero scheme of $\widetilde\xi$ consists of a smooth rational curve together with one isolated non-degenerate point.
On the chart $\{z_0\neq 0\}$, with coordinates
$$
u_1=\frac{z_1}{z_0},
\qquad
u_2=\frac{z_2}{z_0},
\qquad
\tau=\frac{z_3}{z_0^k},
$$
one finds
\begin{equation}\label{eq:field-u-chart-example-3fold}
\widetilde\xi
=
(b-a)u_2\partial_{u_2}
+
(c-ka)\tau\partial_{\tau}.
\end{equation}
Hence the zero locus is given there by $u_2=\tau=0$, with $u_1$ free. The same description holds on $\{z_1\neq 0\}$, and the two pieces glue to a smooth rational curve
$$
C=\overline{\{z_2=0,\ z_3=0\}}\subset X.
$$
Geometrically, $C$ is the line $\{z_2=0\}\subset \mathbb P^2$ inside the distinguished section $\{z_3=0\}$.
On the chart $\{z_2\neq 0\}$, with coordinates
$$
v_0=\frac{z_0}{z_2},
\qquad
v_1=\frac{z_1}{z_2},
\qquad
\eta=\frac{z_3}{z_2^k},
$$
one has
$$
\widetilde\xi
=
(a-b)v_0\partial_{v_0}
+
(a-b)v_1\partial_{v_1}
+
(c-kb)\eta\partial_{\eta}.
$$
By \eqref{eq:weights-example-3fold}, this chart contains a unique isolated zero,
$$
p=\{v_0=v_1=\eta=0\}.
$$
We next verify that no logarithmic zero lies on the exceptional divisor. On $\{z_0\neq 0\}$, write $\sigma=\tau^{-1}$, so that $D=\{\sigma=0\}$ and $\tau\partial_\tau=-\sigma\partial_\sigma$. Since a local frame for $T_X(-\log D)$ is
$
\partial_{u_1},\ \partial_{u_2},\ \sigma\partial_\sigma,
$
equation \eqref{eq:field-u-chart-example-3fold} becomes
$$
\widetilde\xi
=
0\cdot\partial_{u_1}
+
(b-a)u_2\partial_{u_2}
+
(ka-c)(\sigma\partial_\sigma).
$$
Because $ka-c\neq 0$, this section does not vanish along $\sigma=0$. On $\{z_2\neq 0\}$, writing $\rho=\eta^{-1}$, one similarly obtains
$$
\widetilde\xi
=
(a-b)v_0\partial_{v_0}
+
(a-b)v_1\partial_{v_1}
+
(kb-c)(\rho\partial_\rho),
$$
and again there are no zeros along $D$. Therefore
\begin{equation}\label{eq:zero-locus-example-3fold}
Z(\widetilde\xi)=C\sqcup\{p\},
\qquad
Z(\widetilde\xi)\cap D=\varnothing.
\end{equation}
The component $C$ is Bott non-degenerate. Indeed, on the chart \eqref{eq:field-u-chart-example-3fold} one has $C=\{u_2=\tau=0\}$, and the transverse linear action has eigenvalues $b-a$ and $c-ka$, both nonzero by \eqref{eq:weights-example-3fold}. At the isolated point $p$, the linear part of $\widetilde\xi$ has eigenvalues $a-b$, $a-b$, and $c-kb$, again nonzero. Thus both components of \eqref{eq:zero-locus-example-3fold} are non-degenerate in the sense of the theorem.

We now compute the local contributions for $\Phi=c_3$. Since $C$ is smooth of dimension one and the transverse action is invertible, the local residue reduces to the first Chern class of $T_C$. Hence
\begin{equation}\label{eq:curve-residue-example-3fold}
\int_C \operatorname{Res}_C(c_3)
=
\int_{\mathbb P^1} c_1(T_{\mathbb P^1})
=
2.
\end{equation}
At the isolated point $p$, the local contribution is the usual Poincar\'e--Hopf index:
\begin{equation}\label{eq:point-residue-example-3fold}
\int_{\{p\}}\operatorname{Res}_p(c_3)=1.
\end{equation}
Therefore
\begin{equation}\label{eq:sum-local-example-3fold}
\sum_{Z\subset Z(\widetilde\xi)} \int_Z \operatorname{Res}_Z(c_3)=2+1=3.
\end{equation}
We turn to the global term. Set
$
h=\pi^*c_1\bigl(\mathcal O_{\mathbb P^2}(1)\bigr)
$
and
$
\xi=c_1\bigl(\mathcal O_X(1)\bigr).
$
Then the Chow ring of $X$ is generated by $h$ and $\xi$ with relations
\begin{equation}\label{eq:Chow-ring-example-3fold}
h^3=0,
\qquad
\xi^2-kh\,\xi=0.
\end{equation}
Since the exceptional section has class $D=\xi-kh$, one gets
\begin{equation}\label{eq:D-intersections-example-3fold}
h^2D=1,
\qquad
D^2=-khD,
\qquad
D^3=k^2.
\end{equation}
From the relative Euler sequence and the tangent sequence of $\pi$, one obtains
$$
c_1(T_X)=2D+(k+3)h,
\qquad
c_2(T_X)=6hD+3(k+1)h^2,
\qquad
c_3(T_X)=6h^2D.
$$
Since $D$ is smooth,
$$
c\bigl(T_X(-\log D)\bigr)=\frac{c(T_X)}{1+D},
$$
hence
\begin{equation}\label{eq:c3-log-example-3fold}
c_3\bigl(T_X(-\log D)\bigr)
=
c_3(T_X)-c_2(T_X)D+c_1(T_X)D^2-D^3.
\end{equation}
We evaluate the four terms in \eqref{eq:c3-log-example-3fold} separately. First,
$$
\int_X c_3(T_X)=\int_X 6h^2D=6.
$$
Next,
$$
\int_X c_2(T_X)D
=
\int_X \bigl(6hD+3(k+1)h^2\bigr)D
=
6\int_X hD^2+3(k+1)\int_X h^2D
=
3-3k,
$$
by \eqref{eq:D-intersections-example-3fold}. Likewise,
$$
\int_X c_1(T_X)D^2
=
\int_X \bigl(2D+(k+3)h\bigr)D^2
=
2\int_X D^3+(k+3)\int_X hD^2
=
k(k-3),
$$
and finally
$$
\int_X D^3=k^2.
$$
Substituting into \eqref{eq:c3-log-example-3fold}, we obtain
\begin{equation}\label{eq:global-example-3fold}
\int_X c_3\bigl(T_X(-\log D)\bigr)
=
6-(3-3k)+k(k-3)-k^2
=
3.
\end{equation}
Combining \eqref{eq:sum-local-example-3fold} and \eqref{eq:global-example-3fold}, we conclude that
$$
\int_X c_3\bigl(T_X(-\log D)\bigr)
=
\int_C \operatorname{Res}_C(c_3)
+
\int_{\{p\}}\operatorname{Res}_p(c_3)
=
2+1
=
3.
$$
Thus the logarithmic Bott localization formula holds in this explicit three-dimensional example, with one non-isolated smooth component and one isolated point.
\end{example}

\begin{example} \rm \label{ex:log-bott-codim-two}
Let
$$
X=\mathbb P^1_{[x_0:x_1]}\times \mathbb P^1_{[y_0:y_1]}\times \mathbb P^m_{[u_0:\dots:u_m]},
\qquad
D=\mathbb P^1\times \mathbb P^1\times H_\infty,
$$
where $m\ge 2$ and $H_\infty=\{u_0=0\}\subset \mathbb P^m$. Then $\dim X=m+2\ge 4$.
Consider the algebraic $\mathbb C^*$-action on $X$ given by
$$
\lambda\cdot\bigl([x_0:x_1],[y_0:y_1],[u_0:\dots:u_m]\bigr)
=
\bigl([x_0:\lambda x_1],[y_0:\lambda y_1],[u_0:\dots:u_m]\bigr).
$$
Let $v$ be its infinitesimal generator. Then $v$ is a global holomorphic vector field on $X$. On the affine chart
$$
t=\frac{x_1}{x_0},
\qquad
w=\frac{y_1}{y_0},
$$
one has
$$
v=t\partial_t+w\partial_w.
$$
Since the action preserves the vertical divisor $D$, the field $v$ is tangent to $D$, hence
$$
v\in H^0\bigl(X,T_X(-\log D)\bigr).
$$
The zero locus of $v$ is the fixed-point locus of the action, namely
$$
Z(v)=\bigsqcup_{\varepsilon_1,\varepsilon_2\in\{0,\infty\}}
F_{\varepsilon_1,\varepsilon_2},
$$
where
$$
F_{\varepsilon_1,\varepsilon_2}
=
\{x=\varepsilon_1,\ y=\varepsilon_2\}\times \mathbb P^m
\simeq \mathbb P^m.
$$
Thus each connected component of $Z(v)$ has positive dimension $m$ and codimension $2$ in $X$.
Fix one component $F:=F_{\varepsilon_1,\varepsilon_2}$. Since the two normal directions come from the two $\mathbb P^1$-factors, one has
$$
N_{F/X}\simeq \mathcal O_F^{\oplus 2}.
$$
The normal endomorphism $A_F$ has eigenvalues
$$
\lambda_1=
\begin{cases}
1,& \varepsilon_1=0,\\
-1,& \varepsilon_1=\infty,
\end{cases}
\qquad
\lambda_2=
\begin{cases}
1,& \varepsilon_2=0,\\
-1,& \varepsilon_2=\infty.
\end{cases}
$$
Indeed, near $0$ the induced field is $t\partial_t$, whereas near $\infty$, with
$s=1/t$, it is $-s\partial_s$.
Moreover,
$$
K_F:=\ker\bigl(T_X(-\log D)|_F\to N_{F/X}\bigr)\simeq T_{\mathbb P^m}(-\log H_\infty).
$$
Let $h=c_1(\mathcal O_{\mathbb P^m}(1))$. Since
$$
c\bigl(T_{\mathbb P^m}(-\log H_\infty)\bigr)
=
\frac{c(T_{\mathbb P^m})}{1+h}
=
\frac{(1+h)^{m+1}}{1+h}
=
(1+h)^m,
$$
it follows that
$$
c_m(K_F)=h^m,
\qquad
\int_F c_m(K_F)=1.
$$
Take $\Phi=c_{m+2}=c_{\dim X}$. Since $N_{F/X}$ is trivial of rank $2$, its formal Chern roots are $y_1=y_2=0$. If $x_1,\dots,x_m$ are the formal Chern roots of $K_F$, then
$$
\operatorname{Res}_F(c_{m+2})
=
\left[
\frac{x_1\cdots x_m(\lambda_1+y_1)(\lambda_2+y_2)}
{(\lambda_1+y_1)(\lambda_2+y_2)}
\right]_{[2m]}
=
x_1\cdots x_m
=
c_m(K_F).
$$
Hence
$$
\int_F \operatorname{Res}_F(c_{m+2})=1.
$$
Since $Z(v)$ has four connected components,
$$
\sum_F \int_F \operatorname{Res}_F(c_{m+2})=4.
$$
On the other hand,
$$
T_X(-\log D)
\simeq
p_1^*T_{\mathbb P^1}\oplus p_2^*T_{\mathbb P^1}\oplus q^*T_{\mathbb P^m}(-\log H_\infty),
$$
so
$$
c\bigl(T_X(-\log D)\bigr)
=
(1+2\eta_1)(1+2\eta_2)(1+h)^m,
$$
where $\eta_i=c_1(\mathcal O_{\mathbb P^1}(1))$ on the $i$-th $\mathbb P^1$-factor. Therefore
$$
c_{m+2}\bigl(T_X(-\log D)\bigr)=4\,\eta_1\eta_2 h^m,
$$
and thus
$$
\int_X c_{m+2}\bigl(T_X(-\log D)\bigr)=4.
$$
Therefore the global characteristic number agrees with the sum of the local logarithmic residues.
\end{example}

\begin{example}\label{subsec:fm-p2-two-points}\rm 
We briefly record a genuine modular example with nonempty boundary and non-isolated zero locus.
Let
$$
X:=(\mathbb P^2)[2]\cong \operatorname{Bl}_{\Delta}(\mathbb P^2\times \mathbb P^2),
\qquad
D:=E,
$$
where $\Delta\subset \mathbb P^2\times \mathbb P^2$ is the diagonal and $E$ is the exceptional divisor. Thus $X$ is the Fulton--MacPherson compactification of the moduli space of ordered configurations of two distinct points in $\mathbb P^2$, and
$$
X\setminus D \cong \operatorname{Conf}_2(\mathbb P^2).
$$
In particular,
$
\dim X=4.
$

Consider the one-parameter subgroup
$
\lambda\cdot [u_0:u_1:u_2]=[u_0:u_1:\lambda u_2]
$
of $\operatorname{PGL}_3$. Since the diagonal is invariant, the action lifts to $X$ and preserves $D$. Let
$
v\in H^0\bigl(X,T_X(-\log D)\bigr)
$
be the induced logarithmic vector field.
The fixed locus in $\mathbb P^2$ is
$$
L:=\{u_2=0\}\cong \mathbb P^1
\qquad\text{and}\qquad
p:=[0:0:1].
$$
Hence the zero locus of $v$ on $X$ is not isolated. Indeed, it contains positive-dimensional components outside the boundary, namely the closures of
$
L\times L,\ L\times\{p\},\ \{p\}\times L,
$
and also positive-dimensional components inside the boundary $D=E$. More precisely, over $L\subset \Delta$ one gets two fixed sections of
$$
E|_L\cong \mathbb P(T_{\mathbb P^2}|_L),
$$
corresponding to the tangent and normal directions to $L$, while over the fixed point $p$ the whole fiber
$
E_p\cong \mathbb P(T_p\mathbb P^2)\cong \mathbb P^1
$
is fixed.
Applying Theorem~\ref{thm:main} with
$
\Phi=c_4=c_{\dim X},
$
we obtain
$$
\int_X c_4\bigl(T_X(-\log D)\bigr)
=
\sum_{F\subset Z(v)} \int_F  \operatorname{Res}_F(c_4).
$$
On the other hand, by logarithmic Gauss--Bonnet,
$$
\int_X c_4\bigl(T_X(-\log D)\bigr)
=
\chi(X\setminus D)
=
\chi\bigl(\operatorname{Conf}_2(\mathbb P^2)\bigr).
$$
Since
$$
\chi\bigl(\operatorname{Conf}_2(\mathbb P^2)\bigr)
=
\chi(\mathbb P^2\times \mathbb P^2)-\chi(\Delta)
=
9-3
=
6,
$$
we conclude that
$$
\sum_{F\subset Z(v)} \int_F \operatorname{Res}_F(c_4)=6.
$$
Although this number is accessible by the classical blow-up and intersection-theoretic techniques developed for Fulton--MacPherson spaces \cite{FultonMacPherson94,Petersen17,Aluffi10}, the significance of the present computation is different. Namely, it is obtained here directly from our logarithmic localization formula on a genuine moduli space, with the crucial geometric input being the occurrence of positive-dimensional zero components, including components lying in the boundary divisor.
\end{example}

\end{document}